\renewcommand{\bar}[1]{\overline{#1}}
\theoremstyle{definition} \newtheorem{defn}{Definition}[section]
\theoremstyle{remark} 
\theoremstyle{plain} \newtheorem{thm}[defn]{Theorem}
		     \newtheorem*{theorem*}{Theorem}
\theoremstyle{plain} \newtheorem{cor}[defn]{Corollary}
\theoremstyle{plain} \newtheorem{prop}[defn]{Proposition}
\theoremstyle{plain} \newtheorem{lem}[defn]{Lemma}
\theoremstyle{remark} \newtheorem{rem}[defn]{Remark}
\theoremstyle{remark} 
		      \newtheorem*{rmk*}{Remark}
\theoremstyle{plain} \newtheorem*{princ*}{Principle}
\definecolor{ffqqqq}{rgb}{1,0,0}
\definecolor{ududff}{rgb}{0.30196078431372547,0.30196078431372547,1}
\author{Mark Wilkinson\thanks{Department of Mathematics and Computer Science (MACS), Heriot-Watt University, Edinburgh, EH14 4AS. }}
\title{Local-in-time Physical Solutions of the Incompressible Semi-Geostrophic Equations in Eulerian Coordinates}
\date{}
\begin{document}

\maketitle

\begin{abstract}
\noindent We prove the existence of local-in-time smooth solutions of the incompressible semi-geostrophic equations expressed in Eulerian co-ordinates in 3-dimensional smooth bounded simply-connected domains. Our solutions adhere to Cullen's Stability Principle in that the geopotential is guaranteed to be a convex map for all times of its existence. We achieve our results by appealing to the theory of so-called $\mathrm{div}$-$\mathrm{curl}$ systems (or {\em Hodge} systems), making use of recent results of Wang, which yield useful estimates on the ageostrophic velocity field. To our knowledge, this work constitutes the first time that any notion of bounded solution of the semi-geostrophic equations in Eulerian co-ordinates has been constructed on a bounded domain. Indeed, our work solves an open problem as highlighted by, among others, A. Figalli in his CIME lectures on the semi-geostrophic equations. Our methods are largely elementary. We discuss the application of the novel ideas in this work to the case of {\em variable} Coriolis force in the final section of the article.
\end{abstract}

\section{Introduction}
In this article, we study the incompressible semi-geostrophic equations on a bounded, smooth, open, simply-connected domain $\Omega$, which are given by
\begin{equation}\label{SG}
\left\{
\begin{array}{l}
\displaystyle \frac{\partial u_{g}}{\partial t}+(u\cdot\nabla)u_{g}=-Ju_{a}, \vspace{2mm}\\
\displaystyle \frac{\partial\theta}{\partial t}+(u\cdot\nabla)\theta=0, \vspace{2mm} \\
\nabla\cdot u=0,
\end{array}
\right.\tag{SG}
\end{equation}
where $u_{g}:\Omega\times [0, \infty)\rightarrow\mathbb{R}^{3}$ denotes the {\em geostrophic velocity field}
\begin{equation*}
u_{g}(x, t)=\left(
\begin{array}{c}
u_{g, 1}(x, t) \\
u_{g, 2}(x, t) \\
0
\end{array}
\right),
\end{equation*}
$u_{a}:\Omega\times [0, \infty)\rightarrow\mathbb{R}^{3}$ denotes the {\em ageostrophic velocity field} defined by
\begin{equation*}
u_{a}:=u-u_{g}
\end{equation*}
and $\theta: \Omega\times [0, \infty)\rightarrow\mathbb{R}$ denotes the so-called {\em buoyancy anomaly}. Finally, $J\in\mathbb{R}^{3\times 3}$ denotes the matrix
\begin{equation*}
J:=\left(
\begin{array}{ccc}
0 & -1 & 0 \\
1 & 0 & 0 \\
0 & 0 & 0
\end{array}
\right).
\end{equation*} In this work, the Eulerian velocity field $u$ is also subject to the boundary condition 
\begin{equation*}
u\cdot n=0\quad \text{on}\hspace{2mm}\partial\Omega,
\end{equation*}
where $n:\partial\Omega\rightarrow\mathbb{S}^{2}$ denotes the outwardly-directed normal field to the boundary of the domain $\partial \Omega$. The unknown fields $u_{g}$ and $\theta$ are not independent, but are rather linked through the so-called {\em geopotential} $\phi:\Omega\times [0, \infty)\rightarrow \mathbb{R}$ by the relation
\begin{equation*}
\nabla \phi(x, t)=\left(
\begin{array}{c}
u_{g, 2}(x, t) \\
-u_{g, 1}(x, t) \\
\theta(x, t)
\end{array}
\right).
\end{equation*}
This system, first introduced by Eliasson \cite{eliassen} in 1949 and later rediscovered by Hoskins \cite{hoskins1975geostrophic} in 1975, is believed by many to be a reasonable model for the development, and subsequent dynamics, of atmospheric fronts. We invite the reader to consult the book of Cullen \cite{cullen2006mathematical} for more information on the physics of the model. The mathematical structure of system \eqref{SG} is perhaps not immediately apparent in the current form in which it appears, notably due to the absence of a law of evolution for the Eulerian velocity field $u$. However, following the approach of Benamou and Brenier \cite{benamou1998weak}, if one introduces the {\em generalised geopotential} $P$ defined pointwise on $\Omega\times [0, \infty)$ by 
\begin{equation*}
P(x, t):=\phi(x, t)+\frac{1}{2}(x_{1}^{2}+x_{2}^{2}),
\end{equation*}
one finds that the system \eqref{SG} takes the form of the following semi-linear transport equation
\begin{equation}\label{AT}
\frac{\partial T}{\partial t}+(u\cdot\nabla)T=J(T-\mathrm{id}_{\Omega}),\tag{AT}
\end{equation}
subject to the non-linear constraint that $T$ be a time-dependent conservative vector field on $\Omega$, i.e. 
\begin{equation*}
T=\nabla P.
\end{equation*}
Moreover, $\mathrm{id}_{\Omega}(x):=x$ for all $x\in\Omega$. We refer to this equation as (AT) since it is an example of an {\em active transport} equation in an unknown (conservative) vector field. Indeed, the full Eulerian velocity field $u$ is neither a given datum, nor is it required to satisfy a prescribed evolution equation, but is rather determined only by the condition that it advect $T$ in such a way that it remain conservative on $\Omega$ for all times. In their work \cite{benamou1998weak}, Benamou and Brenier proceed to re-express equation \eqref{AT} in time-dependent {\em geostrophic coordinates} $X_{t}:\Omega\rightarrow X_{t}(\Omega)$ given pointwise by
\begin{equation}
X_{t}(x):=\nabla P(x, t)\quad \text{for}\hspace{2mm}x\in\Omega
\end{equation}
for each time $t$. In this coordinate system, \eqref{AT} takes the form of a scalar transport equation coupled with a Monge-Amp\`{e}re equation. In contrast, we find a method by which to analyse system \eqref{AT} directly in natural Eulerian coordinates, rather than passing to Lagrangian or geostrophic coordinates.

This paper tackles and solves the open problem pertaining to the construction of local-in-time classical solutions of \eqref{SG} in Eulerian co-ordinates on bounded simply-connected domains whose associated geostrophic measure is of bounded support in $\mathbb{R}^{3}$. Indeed, whilst this problem has been of concern since the original pioneering work of Benamou and Brenier \cite{benamou1998weak}, it was raised once again relatively recently by Figalli in his 2014 CIME lectures \cite{figs} (we invite the reader to consult open problem 3 therein). As far as a local-in-time theory of the incompressible semi-geostrophic equations is concerned, our work can be considered as the culmination of a sequence of efforts, including the work of Benamou and Brenier \cite{benamou1998weak}, Loeper \cite{loeper2006fully}, Feldman and Tudorascu \cite{feldman2015semi}, Ambrosio, Colombo, De Philippis and Figalli \cite{ambrosio2014global}, and Cheng, Cullen and Feldman \cite{Cheng2018}, among others. We obtain our results by exploiting certain div-curl structure in the semi-geostrophic equations,	and we believe this to be the first work on \eqref{SG} which does so.
%
%
%
\subsection{The Problem of Boundedness of Support of the Geostrophic Measure}
An important quantity in the analysis of the semi-geostrophic equations is the so-called \textbf{geostrophic measure} defined by
\begin{equation}\label{geomeasure}
\nu_{t}:=\nabla P(\cdot, t)\#\mathscr{L}_{\Omega}\quad \text{on}\hspace{2mm}\mathbb{R}^{3},
\end{equation} 
where $\mathscr{L}_{\Omega}$ denotes the normalised restriction of the Lebesgue measure on $\mathbb{R}^{3}$ to $\Omega$. We would like to emphasise that the solutions we construct in this work admit the physically-desirable property that $\nabla P(\cdot, t)\in L^{\infty}$ for all times $t$ of existence. Prior to this work, the best result with regards to solutions of the system \eqref{AT} in bounded domains was achieved by Ambrosio, Colombo, De Philippis and Figalli \cite{ambrosio2014global}, in which the authors were able to construct global-in-time distributional solutions of \eqref{AT} but {\em only} in the case that the fluid domain $\Omega\subset\mathbb{R}^{3}$ is convex. Perhaps even more unsatisfyingly, owing to the fact that the geostrophic measure $\nu_{t}$ corresponding to their solutions must always be strictly globally-supported on $\mathbb{R}^{3}$, their result also suffers from the physical defect that the temperature field $\theta(\cdot, t)$ cannot lie in $L^{\infty}(\Omega)$ for {\em any} time $t$. 

In contrast to the work in \cite{ambrosio2014global}, our domain $\Omega$ need only be open, bounded and simply connected with suitably-smooth boundary $\partial\Omega$. More importantly, the geostrophic measure $\nu_{t}$ built through our smooth solutions $(\nabla P, u)$ is of bounded support in $\mathbb{R}^{3}$ (considered as geostrophic space) for all times $t$. The reader will note that if the support of the measure \eqref{geomeasure} is the whole space $\mathbb{R}^{3}$, then the temperature field $\theta(\cdot, t)$ cannot lie in $L^{\infty}(\Omega)$ for {\em any} time $t$. It is for this reason we term solutions $(\nabla P, u)$ of \eqref{AT} with the property that the associated measure \eqref{geomeasure} is always of bounded support to be \textbf{physical} in this article.

Surprisingly, we achieve our results by rather elementary means, making use of new estimates on inhomogeneous div-curl systems due to Wang \cite{wang2018solvability}. The aforementioned results allow us to derive -- previously inaccessible -- a priori estimates on the full Eulerian velocity field $u$.
%
%
\subsection{Cullen's Stability Principle}
Throughout this article, we also employ the idea due to Cullen (see Cullen \cite{cullen2006mathematical}, chapter 3) that a solution $(\nabla P, u)$ of \eqref{SG} is to be considered as \textbf{stable} if and only if the geopotential $\phi(\cdot, t)$ is a convex function on $\Omega$ for all times $t$. This convexity condition, whilst convenient from the point of view that it ensures \eqref{SG} admit ellipticity properties, can be formulated by means of energetics. The reader may verify that the semi-geostrophic system admits the energy functional
\begin{equation}\label{geoenergy}
E_{\Omega}[T]:=\frac{1}{2}\int_{\Omega}\left((x_{1}-T_{1}(x))^{2}+(x_{2}-T_{2}(x))^{2}-2x_{3}T_{3}(x)\right)\,dx,
\end{equation}
whose value is formally constant along solution trajectories. It can be shown, either by using the notion of inner variation (see Giaquinta and Hildebrandt \cite{giaquinta2004calculus}, section 3.3) or by using the theory optimal transport, that a solution $(\nabla P, u)$ is stable if and only if $\nabla P(\cdot, t)$ minimises \eqref{geoenergy} in `suitable' admissible classes. We expound no longer on Cullen's Stability Principle in this article, but simply choose in the sequel to solve \eqref{AT} in a class of conservative vector fields which derive from a convex potential.
\subsection{Statement of Main Results}
Let us now set out the notion of solution of the active transport equation with which we shall work throughout the sequel. In all the following, we work with the class of uniformly convex initial geopotential fields $\mathcal{P}$ given by
\begin{equation*}
\mathcal{P}:=\left\{
\nabla P_{0}\in W^{3, p}(\Omega, \mathbb{R}^{3})\,:\,D^{2}P_{0}(x)\xi\cdot\xi\geq \lambda|\xi|^{2}\hspace{2mm}\text{for some}\hspace{1mm}\lambda=\lambda(P_{0})>0\hspace{2mm}\text{and all}\hspace{1mm}\xi\in\mathbb{R}^{3}
\right\},
\end{equation*}
where it is assumed that the Lebesgue exponent $p$ is strictly greater than 3.
\begin{defn}[Local-in-time Physical Classical Solutions of \eqref{AT}]
Suppose $\nabla P_{0}\in \mathcal{P}$ and $\alpha\in (0, 1)$. We say that the maps $\nabla P$ and $u$ constitute an associated \textbf{local-in-time physical classical solution} of \eqref{AT} if and only if 
\begin{equation*}
\nabla P\in C^{1}(0, \tau; C^{2, \alpha}(\bar{\Omega}, \mathbb{R}^{3}))\quad \text{and}\quad u\in C^{0}(0, \tau; C^{0, \alpha}_{\sigma}(\bar{\Omega}, \mathbb{R}^{3}))
\end{equation*}
for some $\tau>0$, and satisfy
\begin{equation*}
\frac{\partial}{\partial t}\nabla P(x, t)+D^{2}P(x, t)u(x, t)=J(\nabla P(x, t)-x)
\end{equation*}
pointwise in the classical sense for all $x\in\Omega$ and $t\in (0, \tau_{\ast})$. Moreover, $\nabla P$ admits the uniform convexity property
\begin{equation*}
D^{2}P(x, t)\xi\cdot \xi\geq \lambda_{\ast}|\xi|^{2}\quad \text{for all}\hspace{2mm}\xi\in\mathbb{R}^{3}
\end{equation*}
for some $\lambda_{\ast}>0$, all $x\in\Omega$ and $t\in [0, \tau)$. Finally, $\nabla P(\cdot, 0)=\nabla P_{0}$.
\end{defn}
The following is the main result of this work.
\begin{thm}\label{mainres}
For any $\nabla P_{0}\in \mathcal{P}$, there exists an associated local-in-time physical classical solution of \eqref{AT}.
\end{thm}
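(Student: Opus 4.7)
The plan is to reformulate \eqref{AT} as an abstract Cauchy problem for $\nabla P$ alone, and then construct solutions via a Banach fixed-point argument. The key observation is that the constraint that $\nabla P(\cdot, t)$ remain conservative forces the Eulerian velocity $u$ to be determined implicitly by $\nabla P$ through an inhomogeneous div-curl system. Indeed, taking the curl of \eqref{AT} and using that $\partial_{t}\nabla P$ is itself a gradient, $u$ must solve
\begin{equation*}
\nabla\cdot u=0 \text{ in } \Omega,\qquad \operatorname{curl}(D^{2}P\,u)=\operatorname{curl}(J(\nabla P-\mathrm{id}_{\Omega})) \text{ in } \Omega,\qquad u\cdot n=0 \text{ on } \partial\Omega.
\end{equation*}
The uniform convexity of $P$ ensures that $D^{2}P$ is pointwise positive definite, and Wang's estimates on such Hodge systems yield a unique $u=u[\nabla P]$ with sharp H\"{o}lder bounds depending only on $\lambda_{\ast}$ and $\|\nabla P\|_{C^{2,\alpha}}$. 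Equivalently, by the Poincar\'{e} lemma on the simply-connected $\Omega$, $D^{2}P\,u-J(\nabla P-x)=\nabla\psi$ for a scalar potential $\psi=\psi[\nabla P]$ satisfying a linear elliptic Neumann-type problem with coefficient matrix $(D^{2}P)^{-1}$, and \eqref{AT} collapses to the autonomous evolution $\partial_{t}\nabla P=-\nabla\psi[\nabla P]$, $\nabla P(\cdot,0)=\nabla P_{0}$, a form that manifestly preserves conservativity.

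With this reformulation in hand, I would construct a solution by Picard iteration. Fix $\lambda_{\ast}\in(0,\lambda(P_{0}))$, an exponent $\alpha\in(0,1-3/p)$ so that $W^{3,p}\hookrightarrow C^{2,\alpha}$, and a radius $R>\|\nabla P_{0}\|_{C^{2,\alpha}}$. For $\tau>0$ to be chosen, let $\mathcal{X}_{\tau}$ be the set of $\nabla\tilde{P}\in C^{0}([0,\tau];C^{2,\alpha}(\bar{\Omega},\mathbb{R}^{3}))$ with $\nabla\tilde{P}(\cdot,0)=\nabla P_{0}$, $\|\nabla\tilde{P}\|_{C^{0}(C^{2,\alpha})}\leq R$, and $D^{2}\tilde{P}(x,t)\xi\cdot\xi\geq\lambda_{\ast}|\xi|^{2}$ for all $(x,t,\xi)$. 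Define
\begin{equation*}
(\mathcal{T}\nabla\tilde{P})(x,t):=\nabla P_{0}(x)-\int_{0}^{t}\nabla\psi[\nabla\tilde{P}(\cdot,s)](x)\,ds.
\end{equation*}
Using Wang's estimates together with a continuity-in-time argument, and shrinking $\tau$ so that neither the norm bound nor the uniform convexity can be violated on $[0,\tau]$, one shows $\mathcal{T}$ maps $\mathcal{X}_{\tau}$ into itself. Schauder perturbation estimates applied to the elliptic equation for $\psi[\nabla\tilde{P}_{1}]-\psi[\nabla\tilde{P}_{2}]$ then yield Lipschitz dependence of $\nabla\psi$ on $\nabla P$, showing $\mathcal{T}$ is a contraction in the weaker norm $C^{0}([0,\tau];C^{1,\alpha})$. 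Since $\mathcal{X}_{\tau}$ is closed in this weaker metric (both the uniform-convexity constraint and the $C^{2,\alpha}$-ball are preserved under $C^{1,\alpha}$ limits), the Banach fixed-point theorem furnishes a unique $\nabla P\in\mathcal{X}_{\tau}$ with $\mathcal{T}\nabla P=\nabla P$; paired with $u:=u[\nabla P]$, this is the desired local-in-time physical classical solution.

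The principal obstacle is controlling the regularity of $\nabla\psi$ in terms of $\nabla P$. The coefficients $(D^{2}P)^{-1}$ of the elliptic problem for $\psi$ lie one derivative below $\nabla P$, and a naive application of Schauder theory would entail a loss of one derivative in space, precluding a straightforward contraction at the level of $C^{2,\alpha}$. It is precisely here that Wang's sharp div-curl estimates become indispensable; combined with the structural identity $\nabla\cdot(J(\nabla P-x))=0$ (a consequence of the antisymmetry of $J$ and of the symmetry of $D^{2}P$), they remove what would otherwise be the most singular contribution to the right-hand side and permit one to close the iteration at the correct regularity. A secondary difficulty is ensuring the uniform convexity of $D^{2}P(\cdot,t)$ persists throughout the iteration, which is handled by shrinking $\tau$ so that $D^{2}P$ cannot depart from $D^{2}P_{0}$ by more than $\lambda(P_{0})-\lambda_{\ast}$ in the sup norm.
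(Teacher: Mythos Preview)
Your proposal takes a genuinely different route from the paper. Both share the key reformulation: curling \eqref{AT} yields a div-curl system that determines $u=u[\nabla P]$ via Wang's estimates, making the evolution autonomous in $\nabla P$. From there the paths diverge. The paper does \emph{not} use a fixed-point argument; instead it runs a forward Euler time-discretisation in $W^{3,p}$ (not $C^{2,\alpha}$), solving the div-curl system at each step to obtain $u_{j}^{N}$ and updating $\nabla P_{j+1}^{N}=\nabla P_{j}^{N}-\varepsilon D^{2}P_{j}^{N}u_{j}^{N}+\varepsilon J(\nabla P_{j}^{N}-\mathrm{id}_{\Omega})$ explicitly. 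Uniform-in-$N$ bounds are propagated inductively, with Wang's product estimate $\|D^{2}P\,u\|_{W^{3,p}}\leq c\|F\|_{W^{2,p}}$ preventing derivative loss, and uniform convexity is preserved by choosing $\tau_{\ast}$ small just as you suggest. Compactness then comes from Helly's selection principle for $\mathrm{BV}$-in-time maps into $W^{3,p}$, and one passes to the limit in a weak formulation before bootstrapping to a classical solution. No contraction or stability estimate is ever needed, and accordingly no uniqueness is claimed---the paper explicitly flags uniqueness as open.

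Your Picard scheme is the natural alternative and, if it closes, buys local uniqueness for free. Two points warrant care. First, the contraction step is where the real work lies and you have only asserted it: the difference $u_{1}-u_{2}$ solves a div-curl system whose source contains $\nabla\wedge((D^{2}P_{1}-D^{2}P_{2})u_{2})$, which involves \emph{third} derivatives of $P_{1}-P_{2}$ and is therefore not controlled by $\|\nabla P_{1}-\nabla P_{2}\|_{C^{1,\alpha}}$ alone. A high-norm/low-norm tame-estimate argument is presumably what is needed, but it should be spelled out rather than invoked. Second, the identity $\nabla\cdot(J(\nabla P-x))=0$, while correct, does not by itself remove the singular term in the Neumann problem for $\psi$, because the coefficient $(D^{2}P)^{-1}$ intervenes before the divergence is taken; the derivative gain genuinely comes from Wang's bound on the product $Au$, and it would be cleaner to phrase it that way (and to work in $W^{3,p}$, since that is the form in which the estimate is available).
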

We also have the following important physical corollary of our main result, which settles the open problem as posed by Figalli (see \cite{figs}, open problem 3) in the setting of local-in-time classical solutions of \eqref{AT}.
\begin{cor}
For any $\nabla P_{0}\in\mathcal{P}$, the associated local-in-time physical classical solution $\nabla P$ of \eqref{AT} admits the property that
\begin{equation*}
\mathrm{supp}\,\nabla P(\cdot, t)\#\mathscr{L}_{\Omega}\subset\subset\mathbb{R}^{3}
\end{equation*}
for all $t\in [0, \tau_{\ast}]$.
\end{cor}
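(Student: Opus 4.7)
The corollary is a direct consequence of the regularity properties asserted by Theorem \ref{mainres}, and my plan is to read off compact support of the pushforward measure from continuity of $\nabla P$ up to the spatial boundary.

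First, by Theorem \ref{mainres} together with the definition of a local-in-time physical classical solution, $\nabla P \in C^{1}(0, \tau; C^{2,\alpha}(\bar{\Omega}, \mathbb{R}^{3}))$, and in particular for each $t \in [0, \tau_{\ast}]$ (chosen with $\tau_{\ast} < \tau$) the map $\nabla P(\cdot, t) : \bar{\Omega} \to \mathbb{R}^{3}$ is continuous. Since $\Omega$ is bounded with $\bar{\Omega}$ compact, the image $K_{t} := \nabla P(\bar{\Omega}, t)$ is a compact subset of $\mathbb{R}^{3}$.

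Second, I would apply the elementary fact that the support of a pushforward measure is contained in the closure of the image of the pushing map. Concretely, for any Borel set $B \subset \mathbb{R}^{3} \setminus K_{t}$, one has $\nabla P(\cdot, t)^{-1}(B) \cap \Omega = \emptyset$ and hence
\begin{equation*}
\nu_{t}(B) = \mathscr{L}_{\Omega}\bigl(\nabla P(\cdot, t)^{-1}(B)\bigr) = 0,
\end{equation*}
whence $\mathrm{supp}\,\nu_{t} \subset K_{t} \subset\subset \mathbb{R}^{3}$, as required. (If one wishes a single compact set valid for all $t \in [0, \tau_{\ast}]$, joint continuity of $\nabla P$ on $\bar{\Omega} \times [0, \tau_{\ast}]$ gives compactness of $K := \nabla P(\bar{\Omega} \times [0, \tau_{\ast}])$, which then contains every $\mathrm{supp}\,\nu_{t}$.)

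I do not anticipate any genuine obstacle here: the only nontrivial ingredient is the continuity of $\nabla P(\cdot, t)$ up to $\partial \Omega$, which is supplied by Theorem \ref{mainres}. Everything beyond that is a two-line measure-theoretic remark. The conceptual content of the corollary lies, as the author emphasises in the introduction, not in the proof itself but rather in the contrast with the global-in-time distributional framework of \cite{ambrosio2014global}, where $\nu_{t}$ is necessarily globally supported on $\mathbb{R}^{3}$ and $\theta(\cdot, t) \notin L^{\infty}(\Omega)$; the present boundedness of $\mathrm{supp}\,\nu_{t}$ is exactly what allows $\theta(\cdot, t)$ to remain in $L^{\infty}(\Omega)$ for the classical solutions constructed here.
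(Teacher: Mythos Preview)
Your proposal is correct and is precisely the argument the paper has in mind: the corollary is stated immediately after Theorem~\ref{mainres} without a separate proof, being treated as an immediate consequence of the regularity $\nabla P(\cdot,t)\in C^{2,\alpha}(\bar\Omega,\mathbb{R}^3)$ together with compactness of $\bar\Omega$. Your two-line measure-theoretic observation that $\mathrm{supp}\,\nu_t\subset \nabla P(\bar\Omega,t)$ is exactly what is intended.
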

We claim that the methods outlined in this paper also extend, in a straightforward manner, to the case of the incompressible semi-geostrophic equations subject to a variable Coriolis force. We discuss this in the final section \ref{final} below.
\begin{rem}
The reader will note that no claim on uniqueness of our local-in-time physical classical solutions is present in the statement of our main result. However, given that even the {\em existence} of local-in-time physical classical solutions of \eqref{AT} on bounded domains was not even known prior to this work, we contend this article constitutes an important step forward in the analysis of the incompressible semi-geostrophic equations in Eulerian coordinates. The difficulty in demonstrating uniqueness of smooth solutions on a bounded domain is due to the fact that the known techniques for doing so are amenable only to the case in which $\nabla P(\cdot, t)\#\mathscr{L}_{\Omega}\ll \mathscr{L}_{\mathbb{R}^{3}}$ and for whose associated density $\beta(\cdot, t)$ is positive, bounded, and bounded strictly away from 0. We refer the reader to the work of Feldman and Tudorascu \cite{feldman2017semi} for more on this topic. 
\end{rem}
\subsection{Notation}
For any integer $m\geq 0$ and real number $1\leq p\leq \infty$, we write $W^{m, p}(\Omega, \mathbb{R}^{3})$ to denote the Sobolev space of $\mathbb{R}^{3}$-valued maps on $\Omega$, while $W^{m, p}_{\sigma}(\Omega, \mathbb{R}^{3})$ denotes the set of all maps whose distributional divergence vanishes on $\Omega$. We also write $C^{k, \alpha}(\bar{\Omega}, \mathbb{R}^{3})$ and $C^{k, \alpha}_{\sigma}(\bar{\Omega}, \mathbb{R}^{3})$ to denote the analogous H\"{o}lder spaces. If $(X, \|\cdot\|_{X})$ is a given Banach space and $\tau>0$, we write $\mathrm{BV}([0, \tau]; X)$ to denote the class of all maps of bounded variation on the time interval $[0, \tau]\subset\mathbb{R}$ with range in $X$, and we write $\mathrm{Var}(T; [0, \tau])$ to denote the variation of a map $T$ therein given by
\begin{equation*}
\mathrm{Var}(T; [0, \tau]):=\sup\left\{
\sum_{k=0}^{K(\mathsf{P})-1}\|T(t_{k+1})-T(t_{k})\|_{X}\,:\,\mathsf{P}=\{t_{k}\}_{k=0}^{K(\mathsf{P})-1}\hspace{2mm} \text{is a partition of}\hspace{1mm}[0, \tau]
\right\}.
\end{equation*}
We write $\mathds{1}_{I}:\mathbb{R}\rightarrow \{0, 1\}$ to denote the characteristic function of a subset $I\subset\mathbb{R}$ of the real line. Finally, we say a map $P\in C^{2}(\bar{\Omega})$ is $\lambda$-uniformly convex on $\Omega$ for some $\lambda>0$ if and only if
\begin{equation*}
D^{2}P(x)\xi\cdot\xi\geq \lambda|\xi|^{2}\quad \text{for all}\hspace{2mm}\xi\in\mathbb{R}^{3}
\end{equation*} 
and all $x\in\Omega$.
\section{A Priori Estimates for (SG)}\label{apriori}
We now derive simple and useful a priori estimates on the geopotential $P$. Moreover, utilising the work of Wang \cite{wang2018solvability}, establish {\em new} a priori estimates on the Eulerian velocity field $u$ for \eqref{AT} (and thereby also the {\em ageostrophic} velocity field $u_{a}$) which, to our knowledge, have never heretofore been applied in the analysis of the semi-geostrophic equations.
\subsection{Estimates on the Geopotential $P$}
We seek useful a priori estimates on both the gradient of the generalised geopotential $\nabla P$ and on the Eulerian velocity field $u$. In the pursuit of such estimates, we begin by supposing that for a given $P_{0}\in \mathcal{P}$ there exist $P$ and $u$ with
\begin{equation*}
P\in C^{1}((0, \tau_{\ast}); C^{3, \alpha}(\Omega)) \quad \text{and}\quad u\in C^{0}([0, \tau_{\ast}]; C^{1, \alpha}_{\sigma}(\Omega, \mathbb{R}^{3}))
\end{equation*}
which satisfy \eqref{AT} pointwise in the classical sense on $\Omega\times (0, \tau_{\ast})$ for some $\tau_{\ast}>0$. Moreover, we assume the existence of a compact set $K\subset\subset\mathbb{R}^{3\times 3}_{+}$ such that
\begin{equation}\label{goodset}
D^{2}P(x, t)\in K\quad \text{for all}\hspace{2mm} (x, t)\in\Omega\times [0, \tau_{\ast}).
\end{equation}
Now, under these assumptions, by multiplying throughout both sides of \eqref{AT} by
\begin{equation*}
|\nabla P(x, t)|^{p-2}\nabla P(x, t)
\end{equation*}
and using the fact $u(\cdot, t)$ is incompressible on $\Omega$ for each time $t$ -- together with the flux constraint that $u\cdot n=0$ on the boundary $\partial\Omega$ for all times -- one finds that
\begin{equation*}
\frac{1}{p}\frac{d}{dt}\int_{\Omega}|\nabla P(x, t)|^{p}\,dx\leq \int_{\Omega}|\nabla P(x, t)|^{p}\,dx+\max_{y\in\bar{\Omega}}|y|\int_{\Omega}|\nabla P(x, t)|^{p-1}\,dx,
\end{equation*}
from which it follows by H\"{o}lder's inequality and Gr\"{o}nwall's lemma that
\begin{equation}\label{grad}
\|\nabla P(\cdot, t)\|_{L^{p}(\Omega)}\leq \|\nabla P_{0}\|_{L^{p}(\Omega)}e^{(m+1)\tau_{\ast}}+C
\end{equation}
for some constant $C=C(\Omega, p)>0$ independent of $P_{0}$, where $m:=\max_{y\in\bar{\Omega}}|y|$. Finally, owing to the assumption that \eqref{goodset} hold true, we may conclude that
\begin{equation}\label{apriorip}
\left(\int_{0}^{\tau_{\ast}}\|\nabla P(\cdot, t)\|^{q}_{W^{1, p}(\Omega, \mathbb{R}^{3})}\,dt\right)^{\frac{1}{q}}\leq C
\end{equation}
for all $1\leq q\leq \infty$, for some constant $C=C(p, q, \nabla P_{0})$. With this space-time estimate in place, we advance to the consideration of estimates on the velocity field $u$ in \eqref{AT}.
\subsection{Estimates on the Eulerian Velocity Field $u$}
In order to find estimates for the full velocity field $u$, we appeal to estimates on solutions of so-called $\mathrm{div}$-$\mathrm{curl}$ systems. These are vector PDE systems of the shape
\begin{equation}\label{divcurl}
\left\{
\begin{array}{l}
\nabla\wedge(Au)=F \quad \text{on}\hspace{2mm}\Omega, \vspace{2mm}\\
\nabla\cdot u=G\quad \text{on}\hspace{2mm}\Omega, \vspace{2mm}\\
u\cdot n=H \quad \text{on}\hspace{2mm}\partial\Omega
\end{array}
\right.
\end{equation}
in the unknown map $u:\Omega\rightarrow\mathbb{R}^{3}$, where $A:\Omega\rightarrow\mathbb{R}^{3\times 3}, F:\Omega\rightarrow\mathbb{R}^{3}, G:\Omega\rightarrow\mathbb{R}$ and $H:\partial\Omega\rightarrow\mathbb{R}$ are given data. Systems of this type have been studied by Bourgain and Br\'{e}zis \cite{bourgain2004new}, Cheng and Shkoller \cite{cheng2017solvability}, and Wang \cite{wang2018solvability}, among others. To appreciate the relevance of systems of type \eqref{divcurl} in the analysis of the semi-geostrophic equations, we take curls throughout the equation \eqref{AT} to find that the Eulerian velocity field $u$ satisfies the (elliptic) system
\begin{equation*}
\nabla\wedge(D^{2}P(x, t)u(x, t))=\nabla\wedge J(\nabla P(x, t)-x)
\end{equation*}
for $x\in \Omega$. Manifestly, it follows that for each time $t\in [0, \tau_{\ast}]$ the velocity field $u(\cdot, t)$ satisfies system \eqref{divcurl} pointwise in the classical sense on $\Omega$, when the data are chosen such that
\begin{align*}
A(x):=D^{2}P(x, t), \vspace{2mm}\\
F(x):=\nabla\wedge J(\nabla P(x, t)-x), \vspace{2mm}\\
G(x):=0, \vspace{2mm} \\
H(x):=0,
\end{align*}
treating time $t$ as a parameter. It is at this point we appeal to recent results in \cite{wang2018solvability} on $W^{m, p}(\Omega, \mathbb{R}^{3})$-estimates of solutions of the $\mathrm{div}$-$\mathrm{curl}$ system above. We quote the following result from Wang for the special case that both $G$ and $H$ are the zero map in their respective classes. 
\begin{prop}[\cite{wang2018solvability}, Theorem 2.1]
Suppose $\Omega\subset\mathbb{R}^{3}$ is a bounded, open, simply connected set with smooth boundary $\partial\Omega$. Let $m\geq 1$, $1<p<\infty$ and $\alpha\in (0, 1)$ be given. Suppose $A\in C^{1, \alpha}(\bar{\Omega}, \mathbb{R}^{3\times 3})$ satisfies the uniform convexity condition
\begin{equation*}
A(x)\xi\cdot\xi\geq \lambda |\xi|^{2}\quad\text{for all}\hspace{2mm}\xi\in\mathbb{R}^{3}
\end{equation*}
for some $\lambda>0$ which is independent of $x\in\Omega$, and that $F\in W^{m-1, p}(\Omega)$ obeys the compatibility conditions
\begin{equation*}
\nabla\cdot F=0\quad \text{in the sense of distributions on}\hspace{1mm}\Omega
\end{equation*}
and
\begin{equation*}
\langle F\cdot n, \mathds{1}_{\partial\Omega}\rangle_{\partial\Omega}=0.
\end{equation*}
There exists a unique map $u\in W^{m, p}(\Omega, \mathbb{R}^{3})$ which satisfies the system
\begin{equation*}
\left\{
\begin{array}{l}
\nabla\wedge (Au)=F, \vspace{2mm}\\
\nabla\cdot u=0, \vspace{2mm}\\
u\cdot n=0\quad \text{on}\hspace{2mm}\partial\Omega
\end{array}
\right.
\end{equation*}
in the sense of distributions on $\Omega$ that admits the estimates
\begin{equation}\label{bigcee}
\|u\|_{W^{m, p}(\Omega, \mathbb{R}^{3})}\leq C_{\ast}\|F\|_{W^{m-1, p}(\Omega, \mathbb{R}^{3})}
\end{equation}
and
\begin{equation}\label{lilcee}
\|Au\|_{W^{m, p}(\Omega, \mathbb{R}^{3})}\leq c\|F\|_{W^{m-1, p}(\Omega, \mathbb{R}^{3})}
\end{equation}
for some constants $C_{\ast}=C_{\ast}(\Omega, m, p, \lambda, \mu)>0$ and $c=c(\Omega, m, p, \lambda, \mu)>0$, where the parameter $\mu$ is defined to be $\mu:=\|A\|_{C^{1, \alpha}(\bar{\Omega}, \mathbb{R}^{3\times 3})}$.
\end{prop}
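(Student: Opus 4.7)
The plan is to reduce the first-order div-curl system to a second-order elliptic PDE in a vector potential, solve it variationally, and then extract the $W^{m,p}$-estimates from classical elliptic regularity theory. Since $\Omega$ is bounded, smooth and simply connected, any candidate $u\in W^{m,p}_{\sigma}(\bar\Omega, \mathbb{R}^3)$ satisfying $u\cdot n = 0$ on $\partial\Omega$ admits a vector potential representation $u = \nabla\wedge\psi$ for which the Coulomb-type gauge $\nabla\cdot\psi = 0$ in $\Omega$ together with $\psi\times n = 0$ on $\partial\Omega$ can be imposed, and for which the standard Hodge-type bound $\|\psi\|_{W^{m+1,p}}\leq C\|u\|_{W^{m,p}}$ holds. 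Substituting this into the first equation converts the problem to
\begin{equation*}
\nabla\wedge\bigl(A(x)\,\nabla\wedge\psi(x)\bigr) = F(x)\quad \text{in}\hspace{2mm}\Omega,
\end{equation*}
subject to the gauge and boundary conditions on $\psi$. This is a second-order linear elliptic system for $\psi$ whose principal part is governed by the pointwise positive-definite matrix $A$.

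Next I would invoke Lax-Milgram on the Hilbert space $H$ consisting of all $\psi\in H^1(\Omega, \mathbb{R}^3)$ with $\nabla\cdot\psi = 0$ in $\Omega$ and $\psi\times n = 0$ on $\partial\Omega$. The bilinear form $a(\psi,\varphi) := \int_{\Omega} A\,(\nabla\wedge\psi)\cdot(\nabla\wedge\varphi)\,dx$ is bounded on $H\times H$, and coercive because the hypothesis $A\xi\cdot\xi\geq\lambda|\xi|^2$ combines with a Poincaré-type inequality $\|\psi\|_{H^1}\leq C\|\nabla\wedge\psi\|_{L^2}$ which holds on $H$ precisely because simple connectedness of $\Omega$ rules out harmonic cohomology. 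The compatibility conditions $\nabla\cdot F = 0$ and $\langle F\cdot n,\mathds{1}_{\partial\Omega}\rangle_{\partial\Omega} = 0$ ensure the linear functional $\varphi\mapsto\int_{\Omega} F\cdot\varphi\,dx$ descends consistently to $H$. Lax-Milgram then produces a unique weak solution $\psi$, whence uniqueness of $u = \nabla\wedge\psi$ in the stated class is automatic. A classical Agmon-Douglis-Nirenberg bootstrap for elliptic systems with $C^{1,\alpha}$ coefficients on smooth domains upgrades $\psi$ to $W^{m+1,p}$, absorbing the lower-order terms generated by $\nabla A$ through interpolation; tracing constants yields $\|\psi\|_{W^{m+1,p}}\leq C_{\ast}\|F\|_{W^{m-1,p}}$ with $C_{\ast} = C_{\ast}(\Omega,m,p,\lambda,\mu)$, from which (\ref{bigcee}) follows on setting $u = \nabla\wedge\psi$.

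For the sharper estimate (\ref{lilcee}) on the composite field $w := Au$, the key observation is that $w$ itself solves the first-order system $\nabla\wedge w = F$ in $\Omega$. I would decompose $w = w_0 + \nabla q$, where $w_0 \in W^{m,p}(\Omega, \mathbb{R}^3)$ is a fixed particular solution of $\nabla\wedge w_0 = F$ whose norm is controlled purely in terms of $\|F\|_{W^{m-1,p}}$ (producible, for instance, through a Biot-Savart-type construction compatible with the stated compatibility conditions on $F$), and $q$ is determined by requiring that $u = A^{-1}(w_0 + \nabla q)$ be divergence-free with vanishing normal trace. This reduces the determination of $q$ to the scalar divergence-form Neumann problem
\begin{equation*}
\nabla\cdot\bigl(A^{-1}\nabla q\bigr) = -\nabla\cdot\bigl(A^{-1}w_0\bigr)\quad \text{in}\hspace{2mm}\Omega,
\end{equation*}
whose $W^{m+1,p}$-theory with $C^{1,\alpha}$ coefficient delivers a bound on $\nabla q$, and hence on $w$, without incurring the naive loss that one would otherwise suffer by multiplying (\ref{bigcee}) through by $\|A\|_{C^m(\bar\Omega)}$.

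The principal technical hurdle I would anticipate is the quantitative establishment of the Poincaré-for-curl inequality on the gauged space $H$, together with the careful bookkeeping of how $C_{\ast}$ and $c$ depend on $\lambda$ and $\mu = \|A\|_{C^{1,\alpha}(\bar\Omega, \mathbb{R}^{3\times 3})}$ through the elliptic bootstrap. Simple connectedness of $\Omega$ is indispensable throughout: in its absence, a non-trivial first de Rham cohomology would produce a finite-dimensional space of harmonic fields obstructing both the vector potential representation and the coercivity of $a$, forcing one to impose extra side conditions on $F$ in order to recover the statement as given.
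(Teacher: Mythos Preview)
This proposition carries no proof in the paper: it is stated explicitly as a quotation of Theorem~2.1 from Wang \cite{wang2018solvability} and is invoked throughout only as a black box to furnish the estimates \eqref{bigcee} and \eqref{lilcee}. There is therefore nothing in the paper's own argument against which to compare your sketch.

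For what it is worth, your outline --- vector potential in Coulomb gauge with tangential boundary condition, Lax--Milgram on the curl-constrained subspace, an ADN-type bootstrap for higher regularity, and a Helmholtz decomposition of $w=Au$ coupled to a scalar Neumann problem in $A^{-1}$ for the second estimate --- is a standard and credible route to results of this kind. If you wish to develop it further, the main point requiring care is the regularity step: the bare operator $\psi\mapsto\nabla\wedge(A\,\nabla\wedge\psi)$ is not elliptic in the Agmon--Douglis--Nirenberg sense (its symbol annihilates gradients), so one typically augments it, for instance by adding a term of the form $-\nabla(\nabla\cdot\psi)$, and then recovers the gauge $\nabla\cdot\psi=0$ a posteriori from the data and boundary conditions. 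Your bootstrap paragraph glosses over this. None of this, however, is relevant to the present paper, which simply cites the result.
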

By direct application of this proposition to the case when $A\equiv D^{2}P(\cdot, t)$ and the forcing term is given by $F\equiv \nabla\wedge J(\nabla P(\cdot, t)-\mathrm{id}_{\Omega})$, we deduce in the case of $m=1$ that
\begin{equation*}
\|u(\cdot, t)\|_{W^{1, p}(\Omega, \mathbb{R}^{3})}\leq C_{\ast}\left(
\| \nabla P(\cdot, t)\|_{W^{1, p}(\Omega, \mathbb{R}^{3})}+1
\right),
\end{equation*}
and, in turn, from estimate \eqref{apriorip} that
\begin{equation*}
\left(
\int_{0}^{\tau_{\ast}}\|u(\cdot, t)\|_{W^{1, p}(\Omega, \mathbb{R}^{3})}^{r}\,dt
\right)^{1/r}\leq C
\end{equation*}
for some constant $C=C(\Omega, \nabla P_{0})$ for all $1\leq r\leq \infty$.
\section{A Forward Euler Scheme in $W^{3, p}(\Omega, \mathbb{R}^{3})$}\label{yoyoyo}
Suppose that the geopotential field $\nabla P_{0}\in \mathcal{P}$ is given as an initial datum. Let $\tau>0$ be taken arbitrarily for the moment. We consider the following discrete time forward Euler scheme in $W^{3, p}(\Omega, \mathbb{R}^{3})$ on the time interval $[0, \tau]$ associated to the transport equation \eqref{AT}, namely
\begin{equation}\label{feuler}
\frac{T_{j+1}-T_{j}}{\varepsilon}=-(u_{j}\cdot\nabla)T_{j}+J(T_{j}-\mathrm{id}_{\Omega})
\end{equation}
together with
\begin{equation*}
\nabla\cdot u_{j}=0\quad \text{on}\hspace{2mm}\Omega
\end{equation*}
and the boundary condition
\begin{equation*}
u\cdot n=0\quad \text{on}\hspace{2mm}\partial\Omega,
\end{equation*}
which is supplemented with the initial datum $T_{0}:=\nabla P_{0}$. The time step length $\varepsilon>0$ of this scheme is defined to be
\begin{equation*}
\varepsilon:=\frac{\tau}{N},
\end{equation*}
for some integer $N\geq 1$. We begin with the following definition.
\begin{defn}[Classical Solution of the Forward Euler Scheme]
Suppose $\tau>0$ is given. We say that the sequences of maps 
\begin{equation*}
\{T_{j}\}_{j=0}^{N-1}\subset W^{3, p}(\Omega, \mathbb{R}^{3})\quad \text{and}\quad \{u_{j}\}_{j=0}^{N-1}\subset W^{3, p}_{\sigma}(\Omega, \mathbb{R}^{3})
\end{equation*}
constitute a {\em classical solution} of the forward Euler scheme \eqref{feuler} on $[0, \tau]$ subject to the initial datum $T_{0}:=\nabla P_{0}\in \mathcal{P}$ if and only if: 
\begin{itemize}
\item $T_{j}=\nabla P_{j}$ in $W^{3, p}(\Omega, \mathbb{R}^{3})$ for some $\lambda_{j}$-uniformly convex map $P_{j}\in C^{2}(\Omega)$ for $0\leq j \leq N-2$, and some convex map $P_{N-1}$;
\item the maps $T_{j}$ and $u_{j}$ satisfy \eqref{feuler} pointwise in the classical sense on $\Omega$ for each $0\leq j\leq N-1$.
\end{itemize}
\end{defn}
In what follows, we write $T_{j}^{N}=\nabla P_{j}^{N}$ and $u_{j}^{N}$ to denote the solution maps of the Euler scheme \eqref{feuler}, where $0\leq j\leq N-1$ and $N\geq 1$ is given and fixed. In turn, we define the approximate geopotential field $\nabla P^{N}$ and approximate Eulerian velocity field $u^{N}$ with
\begin{equation*}
\nabla P^{N}\in\mathrm{BV}([0, \tau]; W^{3, p}
(\Omega, \mathbb{R}^{3}))\quad \text{and}\quad u^{N}\in\mathrm{BV}([0, \tau]; W^{3, p}_{\sigma}(\Omega, \mathbb{R}^{3}))
\end{equation*}
pointwise on $\Omega\times [0, \tau]$ by
\begin{equation}\label{appp}
\nabla P^{N}(x, t):=\sum_{j=0}^{N-1}\mathds{1}_{I^{N}_{j}}(t)\nabla P^{N}_{j}(x)
\end{equation}
and
\begin{equation}\label{appu}
u^{N}(x, t):=\sum_{j=0}^{N-1}\mathds{1}_{I^{N}_{j}}(t)u^{N}_{j}(x),
\end{equation}
where $I^{N}_{j}\subset\mathbb{R}$ denotes the semi-open interval $I_{j}^{N}:=[j\varepsilon, (j+1)\varepsilon)$. We have not yet specified how one determines $u_{j}$, given $T_{j}$, at each time step. To illustrate a natural way by which to do this, we focus for the moment on the case of the first time step corresponding to $j=0$.
\subsection{Analysis of the First Time Step}
As mentioned above, it is evident that prescribing the initial geopotential field $P_{0}$ is not enough to determine $T_{1}^{N}$ uniquely via the expression
\begin{equation}\label{teeone}
T_{1}^{N}=\nabla P_{0}-\frac{\tau}{N} D^{2}P_{0}u_{0}^{N}+\frac{\tau}{N} J(\nabla P_{0}-\mathrm{id}_{\Omega}),
\end{equation}
since the approximate velocity field $u_{0}^{N}:\Omega\rightarrow\mathbb{R}^{3}$ is {\em not} a prescribed datum in the initial value problem. In the pursuit of a classical solution, it is evident that one must find an Eulerian velocity field $u_{0}^{N}\in W^{3, p}(\Omega, \mathbb{R}^{3})$ such that the map
\begin{equation*}
x\mapsto \nabla P_{0}(x)-\frac{\tau}{N}D^{2}P_{0}(x)u_{0}^{N}(x)+\frac{\tau}{N}J(\nabla P_{0}(x)-x)
\end{equation*}
is a conservative vector field on $\Omega$, i.e. that $T_{1}^{N}=\nabla P_{1}^{N}$ for some $P_{1}^{N}\in C^{2}(\Omega)$. There is certainly more than one way by which one might determine such a velocity field $u_{0}^{N}$. We opt, in this work, to appeal to the theory of div-curl systems to do so.

Let us consider the following auxiliary system for an unknown $w:\Omega\rightarrow \mathbb{R}^{3}$ at the first time step given by
\begin{equation}\label{divcurlzero}
\left\{
\begin{array}{l}
\nabla\wedge (D^{2}P_{0}w)=F_{0}, \vspace{2mm}\\
\nabla\cdot w=0, \vspace{2mm}\\
w\cdot n=0 \quad \text{on}\hspace{2mm}\partial\Omega,
\end{array}
\right.
\end{equation}
where $F_{0}:= \nabla \wedge J(\nabla P_{0}-\mathrm{id}_{\Omega})$. We shall term $w$ a {\em classical solution} of this div-curl system if and only if it is of class $\mathscr{C}^{1}$ and satisfies \eqref{divcurlzero} pointwise in the classical sense on $\Omega$. Now, since $\nabla P_{0}\in\mathcal{P}$, we know that the map $x\mapsto P_{0}(x)$ is $\lambda_{0}$-uniformly convex on $\Omega$ for some $\lambda_{0}>0$. In addition, as the map $F_{0}$ belongs to $W^{2, p}(\Omega, \mathbb{R}^{3})$ and satisfies the compatibility conditions
\begin{equation*}
\nabla \cdot F_{0}=0\quad \text{on}\hspace{2mm}\Omega
\end{equation*}
and
\begin{equation*}
\int_{\partial\Omega} F_{0}\cdot n\,d\mathscr{H}=0,
\end{equation*}
it is known by the recent work of Wang (\cite{wang2018solvability}, Theorem 2.1) that there exists a unique classical solution $u_{0}$ of system \eqref{divcurlzero} which satisfies the inequalities
\begin{equation}
\|u_{0}\|_{W^{3, p}(\Omega, \mathbb{R}^{3})}\leq C_{0}\|F_{0}\|_{W^{2, p}(\Omega, \mathbb{R}^{3})}
\end{equation}
and
\begin{equation}
\|D^{2}P_{0}u_{0}\|_{W^{3, p}(\Omega, \mathbb{R}^{3})}\leq c_{0}\|F_{0}\|_{W^{2, p}(\Omega, \mathbb{R}^{3})}
\end{equation}
where $C_{0}=C_{\ast}(\Omega, 3, p, \lambda, \mu_{0})>0$, $c_{0}:=c(\Omega, 3, p, \lambda_{0}, \mu_{0})$ and $\mu_{0}:=\|D^{2}P_{0}\|_{C^{1, \alpha}(\bar{\Omega}, \mathbb{R}^{3\times 3})}$. We now use this Eulerian field $u_{0}$ to `advect' $\nabla P_{0}$ forward to the next time step. Indeed, one may verify from \eqref{teeone} by taking curls across the equality that
\begin{equation*}
\nabla \wedge T_{1}^{N}=0\quad \text{on}\hspace{2mm}\Omega,
\end{equation*} 
and since $\Omega$ is open and simply connected by assumption, it follows there exists a scalar map (which we call $P_{1}^{N}$) such that $T_{1}^{N}=\nabla P_{1}^{N}$. Thus, we have established a means by which one can determine $T_{1}^{N}$ as a conservative vector field on $\Omega$. It is important to note that in the application in the results of \cite{wang2018solvability} one must have that the div-curl system of type \eqref{divcurlzero} be {\em uniformly} elliptic. This is, of course, determined by the nature of the matrix-valued map $D^{2}P_{0}$ at the first time step. As such, our scheme must also ensure that $P_{1}$ be uniformly convex on $\Omega$ in order that we can determine $u_{1}^{N}$ in an analogous manner at the next time step. 

It is at this point we turn to the establishment of a priori estimates which permit one to propagate uniform convexity of $P_{0}$ forward in time on some (possibly, short) time interval.
\subsection{A Priori Estimates for Strict Convexity of the Geopotential}
In order to construct physical classical solutions of \eqref{AT}, we work to ensure that the forward Euler scheme \eqref{feuler} maintains uniform convexity of any uniformly convex initial geopotential field on $\Omega$. As such, we seek uniform control on the quantity
\begin{equation*}
\|D^{2}P_{j}^{N}-D^{2}P_{0}\|_{L^{\infty}(\Omega, \mathbb{R}^{3\times 3})}
\end{equation*}
for the given $\lambda_{0}$-uniformly convex geopotential field $P_{0}$ in order that we may infer that
\begin{equation*}
D^{2}P_{j}^{N}(x)\xi\cdot \xi\geq \lambda_{\ast}|\xi|^{2} \quad \text{for all}\hspace{2mm}\xi\in\mathbb{R}^{3}
\end{equation*}
for some $j$- and $N$-independent constant $\lambda_{\ast}>0$. For example, one can show that if
\begin{equation*}
\|D^{2}P_{j}^{N}-D^{2}P_{0}\|_{L^{\infty}(\Omega, \mathbb{R}^{3\times 3})}\leq \frac{\lambda_{0}}{6}
\end{equation*}
for all $1\leq j\leq N-1$ and all $N\geq 2$, then through a simple application of Young's inequality, if follows that 
\begin{equation}
D^{2}P_{j}^{N}(x)\xi\cdot \xi\geq \lambda_{0}|\xi|^{2}+(D^{2}P_{j}(x)-D^{2}P_{0}(x))\xi\cdot \xi\geq \frac{\lambda_{0}}{2}|\xi|^{2},
\end{equation}
whence $P_{j}^{N}$ is a $\lambda_{0}/2$-uniformly convex function on $\Omega$. An additional and important concern in our endeavour to construct local-in-time solutions of \eqref{AT} is ensuring the control of the parameters $C_{\ast}$ and $c$ appearing in \eqref{bigcee} and \eqref{lilcee} above on $j$ and $N$, in particular their dependence on the $C^{1, \alpha}(\bar{\Omega}, \mathbb{R}^{3\times 3})$-norm of the coefficient matrix in the div-curl system at each stage of the construction.
\begin{rem}
As the reader will note, we do not try to find the time of existence $\tau_{\ast}$ which is optimal with respect to our forward Euler algorithm. We opt for a less-than-optimal result only for the sake of clean presentation of our results.
\end{rem}
\subsubsection{Some Definitions of Useful Parameters}
In the sequel, it will be helpful to make use of the following parameters. We write $\omega=\omega(\Omega, p)>0$ to denote the constant
\begin{equation*}
\omega(\Omega, p):=\|J\mathrm{id}_{\Omega}\|_{W^{3, p}(\Omega, \mathbb{R}^{3})}.
\end{equation*}
For a given initial generalised geopotential field $\nabla P_{0}\in\mathcal{P}$ which is $\lambda_{0}$-uniformly convex on $\Omega$, we also set $M_{\ast}=M_{\ast}(\nabla P_{0})>0$ to be
\begin{equation*}
M_{\ast}:=\|D^{2}P_{0}\|_{C^{1, \alpha}(\bar{\Omega}, \mathbb{R}^{3\times 3})}+\frac{\lambda_{0}}{6}.
\end{equation*}
We define the constant $c_{\ast}=c_{\ast}(\Omega, p, \nabla P_{0})$ to be
\begin{equation}
c_{\ast}(\Omega, p, \nabla P_{0}):=\max\left\{\sup_{\mu\leq M_{\ast}}c(\Omega, p, \lambda_{0}/2, \mu), \sup_{\mu\leq M_{\ast}}c(\Omega, p, \lambda_{0}, \mu)\right\},
\end{equation}
where $c(\Omega, p, \lambda, \mu)\equiv c(\Omega, 3, p, \lambda, \mu)>0$ is the constant appearing in \eqref{lilcee} above. In all that follows, we set $\tau_{\ast}=\tau_{\ast}(\Omega, p, \nabla P_{0})>0$ to be
\begin{equation}\label{tau}
\tau_{\ast}:=\frac{1}{1+2c_{\ast}}\log\left(1+\frac{\lambda_{0}}{6C_{M}(\kappa+\|\nabla P_{0}\|_{W^{3, p}(\Omega, \mathbb{R}^{3})})}\right),
\end{equation}
where $C_{M}>0$ is the constant which appears in Morrey's inequality (see, for instance, Evans \cite{evans2010partial}). Finally, we write $\kappa=\kappa(\Omega, p)>0$ to denote the constant
\begin{equation*}
\kappa(\Omega, p):=\frac{\omega+2c_{\ast}|\Omega|^{1/p}}{1+2c_{\ast}}.
\end{equation*}
We are now ready to progress to the determination of useful estimates on classical solutions of the forward Euler scheme \eqref{feuler}.
\subsubsection{Establishment of a Priori Estimates}
We now establish those estimates which are crucial in the construction of local-in-time classical solutions of \eqref{AT}. In all that follows, the H\"{o}lder exponent $\alpha\in (0, 1)$ will always be that which is determined by Morrey's inequality.
\begin{prop}
Let an initial geopotential field $\nabla P_{0}\in\mathcal{P}$ be given. Suppose that the sequences 
\begin{equation*}
\{T_{j}^{N}\}_{j=0}^{N-1}\subset W^{3, p}(\Omega, \mathbb{R}^{3})\quad  \text{and}\quad \{u_{j}^{N}\}_{j=0}^{N-1}\subset W^{3, p}_{\sigma}(\Omega, \mathbb{R}^{3})
\end{equation*}
constitute an associated classical solution of the forward Euler scheme \eqref{feuler} on the time interval $[0, \tau_{\ast}]$, where $\tau_{\ast}=\tau_{\ast}(\nabla P_{0})>0$ is given by \eqref{tau} above. It follows that
\begin{equation}\label{popone}
\|\nabla P_{j}^{N}\|_{W^{3, p}(\Omega, \mathbb{R}^{3})}\leq (\kappa+\|\nabla P_{0}\|_{W^{3, p}(\Omega, \mathbb{R}^{3})})(1+[1+2c_{\ast}]\varepsilon)^{j}-\kappa
\end{equation}
and
\begin{equation}\label{poptwo}
\|\nabla P_{j}^{N}-\nabla P_{j-1}^{N}\|_{W^{3, p}(\Omega, \mathbb{R}^{3})}\leq \varepsilon(1+2c_{\ast})(\kappa+\|\nabla P_{0}\|_{W^{3, p}(\Omega, \mathbb{R}^{3})})(1+[1+2c_{\ast}]\varepsilon)^{j-1}
\end{equation}
for each integer $j$ such that $1\leq j\leq N-1$.
\end{prop}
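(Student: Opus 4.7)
The plan is to establish \eqref{popone} and \eqref{poptwo} by a joint induction on $j$, in which at every step one simultaneously derives the norm recursion and verifies that $P_{j}^{N}$ remains $\lambda_{0}/2$-uniformly convex with $\|D^{2}P_{j}^{N}\|_{C^{1,\alpha}(\bar{\Omega},\mathbb{R}^{3\times 3})}\leq M_{\ast}$. The definition of $\tau_{\ast}$ in \eqref{tau} is engineered precisely so that this joint induction closes on $[0,\tau_{\ast}]$.

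The key manipulation is to rewrite the Euler update \eqref{feuler} as
\begin{equation*}
T_{j+1}^{N} - T_{j}^{N} = -\varepsilon\, D^{2}P_{j}^{N}u_{j}^{N} + \varepsilon J(T_{j}^{N} - \mathrm{id}_{\Omega}),
\end{equation*}
using that $(u_{j}^{N}\cdot\nabla)\nabla P_{j}^{N} = D^{2}P_{j}^{N}u_{j}^{N}$. The div-curl structure enters at this precise point: since $u_{j}^{N}$ solves \eqref{divcurl} with $A = D^{2}P_{j}^{N}$ and $F = \nabla\wedge J(\nabla P_{j}^{N} - \mathrm{id}_{\Omega})$, under the inductive hypothesis on $P_{j}^{N}$ the estimate \eqref{lilcee} delivers
\begin{equation*}
\|D^{2}P_{j}^{N}u_{j}^{N}\|_{W^{3,p}(\Omega,\mathbb{R}^{3})} \leq c_{\ast}\bigl(\|\nabla P_{j}^{N}\|_{W^{3,p}(\Omega,\mathbb{R}^{3})} + 2|\Omega|^{1/p}\bigr),
\end{equation*}
the constant $2|\Omega|^{1/p}$ being the $W^{2,p}$-norm of $\nabla\wedge J\mathrm{id}_{\Omega}=(0,0,2)$. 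Applying the triangle inequality in $W^{3,p}$ to the Euler update, together with $\|JV\|_{W^{3,p}}\leq \|V\|_{W^{3,p}}$ and the identity $\omega + 2c_{\ast}|\Omega|^{1/p} = \kappa(1+2c_{\ast})$, then yields the scalar recursion
\begin{equation*}
\|T_{j+1}^{N}\|_{W^{3,p}} \leq (1 + \varepsilon(1+c_{\ast}))\|T_{j}^{N}\|_{W^{3,p}} + \varepsilon\kappa(1+2c_{\ast}).
\end{equation*}
Since $c_{\ast}\geq 0$ this can be weakened to $\|T_{j+1}^{N}\|_{W^{3,p}} + \kappa\leq (1+\varepsilon(1+2c_{\ast}))(\|T_{j}^{N}\|_{W^{3,p}} + \kappa)$, and iterating this geometric recursion delivers \eqref{popone}. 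The increment estimate \eqref{poptwo} then follows by applying the same triangle inequality to $T_{j+1}^{N} - T_{j}^{N}$ directly and substituting \eqref{popone} at index $j-1$ into the resulting bound.

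To close the induction on the convexity/$C^{1,\alpha}$ hypothesis, I would telescope \eqref{poptwo} and sum the geometric series to obtain
\begin{equation*}
\|\nabla P_{j}^{N} - \nabla P_{0}\|_{W^{3,p}} \leq (\kappa+\|\nabla P_{0}\|_{W^{3,p}})\bigl[(1+(1+2c_{\ast})\varepsilon)^{j} - 1\bigr].
\end{equation*}
For $j\varepsilon\leq \tau_{\ast}$, the elementary bound $(1+s)^{k}\leq e^{sk}$ combined with the explicit form \eqref{tau} shows that $\|\nabla P_{j}^{N} - \nabla P_{0}\|_{W^{3,p}}\leq \lambda_{0}/(6C_{M})$. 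Morrey's inequality then promotes this to $\|D^{2}P_{j}^{N} - D^{2}P_{0}\|_{C^{1,\alpha}}\leq \lambda_{0}/6$, which simultaneously confers $\lambda_{0}/2$-uniform convexity on $P_{j}^{N}$ and the bound $\|D^{2}P_{j}^{N}\|_{C^{1,\alpha}}\leq M_{\ast}$ needed to re-apply Wang at the next step, thereby closing the induction.

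The hard part is not the algebra of the recursion, but rather the joint nature of the induction: the universal constant $c_{\ast}$ in \eqref{lilcee} depends on both the ellipticity constant and the $C^{1,\alpha}$-norm of the coefficient matrix, so the norm recursion and the convexity/$C^{1,\alpha}$ propagation must be carried forward in tandem. The expression \eqref{tau} is precisely the time threshold for which Morrey's inequality applied to the telescoped increment estimate still keeps $D^{2}P_{j}^{N}$ within the range of matrices for which $c_{\ast}$ is a valid bound.
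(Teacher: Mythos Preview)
Your proposal is correct and follows essentially the same route as the paper: a joint induction that couples the $W^{3,p}$ norm recursion (obtained from the Euler update via the div--curl estimate \eqref{lilcee}) with propagation of the $\lambda_{0}/2$-uniform convexity and the $C^{1,\alpha}$ bound on $D^{2}P_{j}^{N}$, closed using Morrey's inequality and the explicit choice of $\tau_{\ast}$. The only cosmetic discrepancy is that the paper carries a factor $2c_{\ast}$ (rather than $c_{\ast}$) directly in the recursion coefficient from its crude bound on $\|\nabla\wedge J\nabla P_{j}^{N}\|_{W^{2,p}}$, whereas you obtain $c_{\ast}$ and then weaken---the end estimates \eqref{popone} and \eqref{poptwo} coincide.
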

\begin{proof}
We achieve the proof of this proposition by means of an inductive argument. 
\subsubsection*{Case $j=1$.}
To begin, by appealing to the definition of $\nabla P_{1}^{N}$ in \eqref{feuler} above, one finds that
\begin{equation*}
\|\nabla P_{1}^{N}\|_{W^{3, p}(\Omega, \mathbb{R}^{3})}\leq (1+[1+2c_{0}]\varepsilon)\|\nabla P_{0}\|_{W^{3, p}(\Omega, \mathbb{R}^{3})}+\varepsilon(\omega+2c_{0}|\Omega|^{1/p}),
\end{equation*}
where $c_{0}>0$ is the constant appearing in \eqref{lilcee} above when $A$ is taken to be $D^{2}P_{0}$, namely
\begin{equation*}
c_{0}:=c(\Omega, 3, p, \lambda_{0}, \mu_{0}),
\end{equation*}
with $\mu_{0}:=\|D^{2}P_{0}\|_{C^{1, \alpha}(\bar{\Omega}, \mathbb{R}^{3\times 3})}$. Noting that one has the bound $c_{0}\leq c_{\ast}$, it follows that \eqref{popone} holds in the case $j=1$. A similar calculation yields that
\begin{equation*}
\|\nabla P_{1}^{N}-\nabla P_{0}\|_{W^{3, p}(\Omega, \mathbb{R}^{3})}\leq \varepsilon (1+2c_{\ast})(\kappa + \|\nabla P_{0}\|_{W^{3, p}(\Omega, \mathbb{R}^{3})}),
\end{equation*}
the straightforward details of which we leave to the reader. As a consequence of both \eqref{popone} and \eqref{poptwo} in the case $j=1$, we note that
\begin{equation*}
\begin{array}{l}
\quad \| D^{2}P_{1}^{N}\|_{C^{1, \alpha}(\bar{\Omega}, \mathbb{R}^{3\times 3})} \vspace{2mm} \\ 
\leq \| D^{2}P_{0}\|_{C^{1, \alpha}(\bar{\Omega}, \mathbb{R}^{3\times 3})}+\| D^{2}P_{1}^{N}-D^{2}P_{0}\|_{C^{1, \alpha}(\bar{\Omega}, \mathbb{R}^{3\times 3})} \vspace{2mm}\\
\leq \| D^{2}P_{0}\|_{C^{1, \alpha}(\bar{\Omega}, \mathbb{R}^{3\times 3})}+C_{M}\|\nabla P_{1}^{N}-\nabla P_{0}\|_{W^{3, p}(\Omega, \mathbb{R}^{3})}\vspace{2mm}\\
\leq \| D^{2}P_{0}\|_{C^{1, \alpha}(\bar{\Omega}, \mathbb{R}^{3\times 3})}+C_{M}\varepsilon (1+2c_{\ast})(\kappa + \|\nabla P_{0}\|_{W^{3, p}(\Omega, \mathbb{R}^{3})})\sum_{k=0}^{N-1}(1+[1+2c_{\ast}]\varepsilon)^{k},
\end{array}
\end{equation*}
from which it follows by definition of $\tau_{\ast}>0$ above that
\begin{equation*}
\| D^{2}P_{1}^{N}\|_{C^{1, \alpha}(\bar{\Omega}, \mathbb{R}^{3\times 3})}\leq \| D^{2}P_{0}\|_{C^{1, \alpha}(\bar{\Omega}, \mathbb{R}^{3\times 3})}+\frac{\lambda_{0}}{6}=M_{\ast}.
\end{equation*}
Moreover, as $\|D^{2} P_{1}^{N}-D^{2}P_{0}\|_{L^{\infty}(\Omega, \mathbb{R}^{3\times 3})}\leq \lambda_{0}/6$, it follows that $P_{1}^{N}$ is $\lambda_{0}/2$-uniformly convex on $\Omega$. As such, one may infer that
\begin{equation*}
c_{1}^{N}:=c(\Omega, 3, p, \lambda_{0}/2, \mu_{1}^{N})
\end{equation*}
with $\mu_{1}^{N}:=\|D^{2}P_{1}^{N}\|_{C^{1, \alpha}(\bar{\Omega}, \mathbb{R}^{3\times 3})}$ also admits the bound $c_{1}^{N}\leq c_{\ast}$.
\subsubsection*{Case $j=J$, $1<J<N$.}
Let us now assume it is the case that \eqref{popone} and \eqref{poptwo} hold true for some $j=J$, where $J$ is an integer chosen such that $1<J<N$. We deduce from a telescoping argument that
\begin{equation*}
\|\nabla P_{J}^{N}-\nabla P_{0}\|_{W^{3, p}(\Omega, \mathbb{R}^{3})}\leq \varepsilon(1+2c_{\ast})(\kappa+\|\nabla P_{0}\|_{W^{3, p}(\Omega, \mathbb{R}^{3})})\sum_{k=0}^{J-1}(1+[1+2c_{\ast}]\varepsilon)^{j},
\end{equation*}
from which it follows in turn that
\begin{equation*}
\|\nabla P_{J}^{N}-\nabla P_{0}\|_{W^{3, p}(\Omega, \mathbb{R}^{3})}\leq \left(\kappa+\|\nabla P_{0}\|_{W^{3, p}(\Omega, \mathbb{R}^{3})}\right)\left(
e^{(1+2c_{\ast})\tau_{\ast}}-1
\right).
\end{equation*}
Finally, an application of Morrey's inequality reveals by definition of $\tau_{\ast}>0$ that
\begin{equation*}
\| D^{2}P_{J}^{N}-D^{2}P_{0}\|_{L^{\infty}(\Omega, \mathbb{R}^{3\times 3})}\leq \frac{\lambda_{0}}{6},
\end{equation*}
whence $P_{J}^{N}$ is a $\lambda_{0}/2$-uniformly convex function on $\Omega$. Moreover, by appealing directly to \eqref{popone}, one finds that
\begin{equation*}
\|\nabla P_{J}^{N}\|_{W^{3, p}(\Omega, \mathbb{R}^{3})}\leq (\kappa+\|\nabla P_{0}\|_{W^{3, p}(\Omega, \mathbb{R}^{3})})e^{(1+2c_{\ast})\tau_{\ast}}-\kappa,
\end{equation*}
from which we deduce from yet another application of Morrey's inequality that
\begin{equation*}
\|\ D^{2}P_{J}^{N}\|_{C^{1, \alpha}(\bar{\Omega}, \mathbb{R}^{3\times 3})}\leq M_{\ast}.
\end{equation*}
Thus, under the assumption that \eqref{popone} and \eqref{poptwo} hold true, we may infer that
\begin{equation}\label{impconst}
c(\Omega, p, \lambda_{0}/2, \mu_{J}^{N})\leq c_{\ast}(\Omega, p).
\end{equation}
It is the estimate \eqref{impconst} which is crucial in passing to the final stage of our induction argument. 
\subsubsection*{Case $j=J+1$.}
By assuming that the previous case holds true, appealing to the definition of the forward Euler scheme, and making use of inequality \eqref{impconst} above, one can show that
\begin{equation}
\|\nabla P_{J+1}^{N}\|_{W^{3, p}(\Omega, \mathbb{R}^{3})}\leq (1+[1+2c_{\ast}]\varepsilon)\|\nabla P_{J}^{N}\|_{W^{3, p}(\Omega, \mathbb{R}^{3})}+\varepsilon(\omega+2c_{\ast}|\Omega|^{1/p}),
\end{equation}
from which it follows by the case $j=J$ assumption that
\begin{equation}
\|\nabla P_{J+1}^{N}\|_{W^{3, p}(\Omega, \mathbb{R}^{3})}\leq(\kappa+\|\nabla P_{0}\|_{W^{3, p}(\Omega, \mathbb{R}^{3})})(1+[1+2c_{\ast}]\varepsilon)^{J+1}-\kappa.
\end{equation}
As similar argument also can be applied to the difference $\nabla P_{J+1}^{N}-\nabla P_{J}^{N}$ in $W^{3, p}(\Omega, \mathbb{R}^{3})$. We leave the straightforward details thereof to the reader.
\end{proof}
\subsection{Construction of a Classical Solution of the Forward Euler Scheme}\label{algo}
Using the estimates established in the previous section, it is straightforward to demonstrate the existence of classical solutions of the forward Euler scheme. 
\begin{prop}[Existence of Classical Solutions of the Euler Scheme]\label{existence}
Suppose an initial datum $\nabla P_{0}\in \mathcal{P}$ and integer $N\geq 2$ are given. There exists an associated classical solution $\{\nabla P_{j}^{N}\}_{j=0}^{N-1}$ and $\{u^{N}_{j}\}_{j=0}^{N-1}$ of the Euler scheme \eqref{feuler} on $[0, \tau_{\ast}]$, where $\tau_{\ast}>0$ is given by \eqref{tau}.
\end{prop}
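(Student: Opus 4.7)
The plan is to build the two sequences $\{\nabla P_j^{N}\}_{j=0}^{N-1}$ and $\{u_j^{N}\}_{j=0}^{N-1}$ by forward induction on $j$, starting from $T_0 := \nabla P_0$. At stage $j$ I take as the inductive hypothesis that $P_j^{N}$ is $\lambda_0/2$-uniformly convex on $\Omega$ with $\|D^{2}P_j^{N}\|_{C^{1,\alpha}(\bar{\Omega}, \mathbb{R}^{3\times 3})} \leq M_{\ast}$ (trivially replaced, at $j=0$, by $\lambda_0$-uniform convexity and the initial $C^{1,\alpha}$ bound). Under this hypothesis I apply Wang's theorem (quoted above) with $A := D^{2}P_j^{N}$ and $F := F_j := \nabla \wedge J(\nabla P_j^{N} - \mathrm{id}_{\Omega})$ to obtain a unique $u_j^{N} \in W^{3,p}_{\sigma}(\Omega, \mathbb{R}^{3})$ solving the div-curl system. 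Applicability is immediate: $F_j$ is itself a curl, hence divergence-free with vanishing total flux across $\partial\Omega$ by the divergence theorem, while uniform convexity and the H\"older bound on $D^{2}P_j^{N}$ deliver the ellipticity and coefficient regularity required. The Euler update rule \eqref{feuler} then defines $T_{j+1}^{N}$, and two verifications remain: that $T_{j+1}^{N}$ is conservative with a scalar potential $P_{j+1}^{N}$ of the prescribed regularity, and that $P_{j+1}^{N}$ itself satisfies the inductive hypothesis.

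For the first verification, rewrite the update as $T_{j+1}^{N} = T_j^{N} - \varepsilon\, D^{2}P_j^{N} u_j^{N} + \varepsilon J(T_j^{N} - \mathrm{id}_{\Omega})$ and take the curl. The first term contributes zero since $T_j^{N} = \nabla P_j^{N}$, while, by the very construction of $u_j^{N}$, the curls of the remaining two terms cancel exactly. Thus $\nabla \wedge T_{j+1}^{N} = 0$ on the simply connected $\Omega$, and the classical Poincar\'e lemma yields a scalar $P_{j+1}^{N}$ with $T_{j+1}^{N} = \nabla P_{j+1}^{N}$. Regularity is immediate: $T_{j+1}^{N} \in W^{3,p}(\Omega, \mathbb{R}^{3})$, and since $p > 3$ Morrey's inequality embeds $W^{3,p}(\Omega, \mathbb{R}^{3}) \hookrightarrow C^{2,\alpha}(\bar{\Omega}, \mathbb{R}^{3})$, so that in particular $D^{2}P_{j+1}^{N} \in C^{1,\alpha}(\bar{\Omega}, \mathbb{R}^{3\times 3})$.

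For the second verification, the preceding proposition and the telescoping of \eqref{poptwo} — combined with the calibration \eqref{tau} of $\tau_{\ast}$ — yield $\|\nabla P_{j+1}^{N} - \nabla P_0\|_{W^{3,p}(\Omega, \mathbb{R}^{3})} \leq \lambda_0/(6 C_M)$, whence Morrey's inequality gives $\|D^{2}P_{j+1}^{N} - D^{2}P_0\|_{L^{\infty}(\Omega, \mathbb{R}^{3\times 3})} \leq \lambda_0/6$. This simultaneously certifies the $\lambda_0/2$-uniform convexity of $P_{j+1}^{N}$ and the bound $\|D^{2}P_{j+1}^{N}\|_{C^{1,\alpha}(\bar{\Omega}, \mathbb{R}^{3\times 3})} \leq M_{\ast}$, closing the induction. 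The only genuine structural concern is uniformity of Wang's constants $C_{\ast}$ and $c_{\ast}$ across stages, since both depend on the $C^{1,\alpha}$ norm of the coefficient matrix; this has already been engineered into the definitions of $c_{\ast}$, $M_{\ast}$, and $\tau_{\ast}$ in the previous subsection. Consequently no substantive analytic obstacle remains at this point, and the argument reduces to the clean inductive construction sketched above.
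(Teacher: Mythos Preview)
Your proposal is correct and follows precisely the approach the paper intends: the paper omits the proof entirely, remarking only that ``using the estimates established in the previous section, it is straightforward to demonstrate the existence of classical solutions of the forward Euler scheme,'' and your inductive construction --- Wang's theorem at each step to produce $u_j^{N}$, the curl-free/Poincar\'e argument to recover $P_{j+1}^{N}$, and the a priori bounds \eqref{popone}--\eqref{poptwo} together with the choice \eqref{tau} of $\tau_{\ast}$ to propagate uniform convexity and the $C^{1,\alpha}$ coefficient bound --- is exactly that straightforward argument. The only cosmetic point is that the preceding proposition is stated for a full classical solution, whereas in the construction you are invoking its conclusion at an intermediate stage; since that proposition's proof is itself a forward induction using only data up to the current index, this is harmless, and you might simply note that the two inductions run in tandem.
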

\begin{proof}
Omitted.
\end{proof}
With this result in place, we may now state the sense in which the approximate fields $\nabla P^{N}$ and $u^{N}$ defined by \eqref{appp} and \eqref{appu}, respectively, satisfy the active transport equation \eqref{AT}. 
\begin{prop}\label{preAT}
Suppose $\nabla P_{0}\in \mathcal{P}$, and let $\tau_{\ast}>0$ denote the existence time given by \eqref{tau} above. Let $N\geq 1$ be given and fixed. The associated approximate geopotential field $\nabla P^{N}:\Omega\times [0, \tau_{\ast}]\rightarrow\mathbb{R}^{3}$ and approximate Eulerian velocity field $u^{N}:\Omega\times [0, \tau_{\ast}]\rightarrow\mathbb{R}^{3}$ satisfy the integral equality
\begin{align}\label{preat}
\int_{0}^{\tau_{\ast}}\int_{\Omega}
\nabla P^{N}(x, t)\cdot \left(\frac{\psi(x, t-\varepsilon)-\psi(x, t)}{-\varepsilon}\right)+(u^{N}(x, t)\cdot\nabla)\psi(x, t)\cdot\nabla P^{N}(x, t)\,dxdt \vspace{2mm} \notag \\
=-\int_{0}^{\tau_{\ast}}\int_{\Omega}J(\nabla P^{N}(x, t)-x)\cdot\psi(x, t)\,dxdt
\end{align}
for all $\psi\in C^{\infty}_{c}(\Omega\times (0, \tau_{\ast}), \mathbb{R}^{3})$ with the property $\mathrm{supp}\,\psi\subset\subset \Omega\times (\varepsilon, \tau_{\ast}-\varepsilon)$.
\end{prop}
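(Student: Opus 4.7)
The plan is to substitute the piecewise-constant-in-time definitions \eqref{appp}--\eqref{appu} of $\nabla P^{N}$ and $u^{N}$ into each of the three space--time integrals in \eqref{preat}, reduce each of them to a finite sum over the time-step index $j$, and then recognise the resulting identity as an $\Omega$-integrated and $j$-summed version of the pointwise forward Euler relation \eqref{feuler}. To organise the calculation, I first introduce the time-averaged test functions
\[
\Psi_{j}(x):=\int_{I_{j}^{N}}\psi(x, t)\, dt,\qquad j=0,\ldots, N-1,
\]
and set $\Psi_{-1}:=0$. Since $\mathrm{supp}\,\psi\subset\subset\Omega\times(\varepsilon, \tau_{\ast}-\varepsilon)$, and the intervals $I_{0}^{N}=[0,\varepsilon)$ and $I_{N-1}^{N}=[\tau_{\ast}-\varepsilon, \tau_{\ast})$ do not meet the temporal support of $\psi$, both $\Psi_{0}$ and $\Psi_{N-1}$ vanish identically on $\Omega$, and each $\Psi_{j}$ lies in $C^{\infty}_{c}(\Omega, \mathbb{R}^{3})$.

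With these abbreviations, the change of variables $s=t-\varepsilon$ in the shifted integral recasts the left-hand side of \eqref{preat} as
\[
I_{1}+I_{2}=\frac{1}{\varepsilon}\sum_{j=0}^{N-1}\int_{\Omega}\nabla P_{j}^{N}\cdot(\Psi_{j}-\Psi_{j-1})\, dx+\sum_{j=0}^{N-1}\int_{\Omega}(u_{j}^{N}\cdot\nabla)\Psi_{j}\cdot\nabla P_{j}^{N}\, dx,
\]
whereas the right-hand side of \eqref{preat} becomes $I_{3}:=-\sum_{j=0}^{N-1}\int_{\Omega}J(\nabla P_{j}^{N}-\mathrm{id}_{\Omega})\cdot\Psi_{j}\, dx$. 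An Abel summation applied to $I_{1}$, in which the endpoint contributions vanish thanks to $\Psi_{-1}=0=\Psi_{N-1}$, yields
\[
I_{1}=-\frac{1}{\varepsilon}\sum_{j=0}^{N-2}\int_{\Omega}\Psi_{j}\cdot(\nabla P_{j+1}^{N}-\nabla P_{j}^{N})\, dx.
\]
A standard integration by parts in $x$ for $I_{2}$, exploiting that $\nabla\cdot u_{j}^{N}=0$ on $\Omega$ and that each $\Psi_{j}$ is compactly supported in $\Omega$, then transfers the differential operator from $\Psi_{j}$ onto $\nabla P_{j}^{N}$ to give $I_{2}=-\sum_{j=0}^{N-2}\int_{\Omega}\Psi_{j}\cdot(u_{j}^{N}\cdot\nabla)\nabla P_{j}^{N}\, dx$.

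The desired equality $I_{1}+I_{2}=I_{3}$ is then obtained by testing the pointwise forward Euler identity \eqref{feuler} against each $\Psi_{j}$ for $0\leq j\leq N-2$, integrating over $\Omega$, and summing in $j$. I expect no substantive obstacle here: the proof is essentially algebraic bookkeeping, and the only delicate points are (i) the vanishing of the Abel-summation endpoint contributions, which rests on the assumed location of $\mathrm{supp}\,\psi$, and (ii) the vanishing of the spatial boundary terms in the integration by parts, which holds automatically since $\psi(\cdot, t)$ is compactly supported in $\Omega$ (the boundary condition $u_{j}^{N}\cdot n=0$ would also suffice, but is not needed for this step).
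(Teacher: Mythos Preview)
Your argument is correct and is precisely the natural unpacking of the paper's one-line proof, which reads in its entirety ``This follows immediately from the definition of the scheme \eqref{feuler}.'' You have simply supplied the bookkeeping the author deemed routine: splitting the time integral over the intervals $I_{j}^{N}$, performing the discrete summation by parts, and integrating by parts in space using $\nabla\cdot u_{j}^{N}=0$; nothing more is needed, and there is no alternative route to compare against.
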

\begin{proof}
This follows immediately from the definition of the scheme \eqref{feuler}.
\end{proof}
To conclude our proof of theorem \ref{mainres}, one task remains to be completed. Indeed, we must establish compactness of the sequences of approximants $\{\nabla P^{N}\}_{N=1}^{\infty}$ and $\{u^{N}\}_{N=1}^{\infty}$ in suitable topologies, so that we may pass to the limit in \eqref{preat} above as $N\rightarrow \infty$ in order to recover a local-in-time physical classical solution of \eqref{AT}. We tackle this task in the following section.
\subsection{Uniform Bounds on Approximants $\nabla P^{N}$ and $u^{N}$}
As the approximants $\nabla P^{N}$ lie in function spaces whose members are of bounded variation in time, it is natural to seek to establish compactness of $\{\nabla P^{N}\}_{N=1}^{\infty}$ in $\mathrm{BV}([0, \tau_{\ast}]; W^{3, p}(\Omega, \mathbb{R}^{3}))$. On the other hand, by way of the estimates in \cite{wang2018solvability}, it will be convenient to establish compactness of $\{u^{N}\}_{N=1}^{\infty}$ in the space $L^{r}(0, \tau_{\ast}; W^{3, p}(\Omega, \mathbb{R}^{3}))$ instead, for some $r>1$. For the convenience of the reader, we state the following version of Helly's Selection Principle for maps on compact intervals of the real line with range in a reflexive Banach space. The result we quote is taken from Barbu and Precupanu (\cite{barbu2012convexity}, theorem 1.126) in the special case when the Banach space is taken to be $W^{3, p}(\Omega, \mathbb{R}^{3})$.
\begin{thm}[Helly's Selection Principle]\label{helly}
Suppose $1<p<\infty$. Let the sequence $\{S^{N}\}_{N=1}^{\infty}\subset\mathrm{BV}([0, \tau_{\ast}]; W^{3, p}(\Omega, \mathbb{R}^{3}))$ be such that
\begin{equation*}
\|S^{N}(\cdot, t)\|_{W^{3, p}(\Omega, \mathbb{R}^{3})}\leq C\quad \text{for all}\hspace{2mm}t\in [0, \tau^{\ast}]
\end{equation*}
and
\begin{equation*}
\mathrm{Var}(S^{N}; [0, \tau_{\ast}])\leq C
\end{equation*}
for some constant $C>0$ independent of $N\geq 1$. It follows that there exists a (relabelled) subsequence of $\{S^{N}\}_{N=1}^{\infty}$ and a map $S\in\mathrm{BV}([0, \tau^{\ast}]; W^{3, p}(\Omega, \mathbb{R}^{3}))$ such that
\begin{equation*}
S^{N}(\cdot, t)\rightharpoonup S(\cdot, t)\quad \text{in}\hspace{2mm}W^{3, p}(\Omega, \mathbb{R}^{3})
\end{equation*}
for all $t\in [0, \tau^{\ast}]$, and
\begin{equation*}
\int_{0}^{\tau_{\ast}}\psi\,dS^{N}\rightharpoonup\int_{0}^{\tau_{\ast}}\psi\,dS\quad\text{in}\hspace{2mm}W^{3, p}(\Omega, \mathbb{R}^{3})
\end{equation*}
for all $\psi \in C^{0}([0, \tau^{\ast}])$.
\end{thm}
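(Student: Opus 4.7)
The plan is to adapt the classical Helly selection argument to the Banach-space-valued setting, leveraging the fact that $W^{3,p}(\Omega,\mathbb{R}^{3})$ is reflexive and separable for $1<p<\infty$. First, I fix a countable dense subset $D=\{\tau_{k}\}_{k=1}^{\infty}\subset[0,\tau_{\ast}]$ containing the endpoints. For each $\tau_{k}$, the uniform bound $\|S^{N}(\cdot,\tau_{k})\|_{W^{3,p}(\Omega,\mathbb{R}^{3})}\leq C$ combined with the Banach--Alaoglu theorem yields a weakly convergent subsequence. A Cantor diagonal extraction produces a single subsequence (not relabelled) along which $S^{N}(\cdot,\tau_{k})\rightharpoonup S(\cdot,\tau_{k})$ in $W^{3,p}(\Omega,\mathbb{R}^{3})$ for every $\tau_{k}\in D$. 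Applying weak lower semicontinuity of the norm to partial sums arising from a finite subpartition of $D$, I would show that the pointwise limit $S$ restricted to $D$ is of bounded variation with $\mathrm{Var}(S;D)\leq C$.

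Next, I would extend $S$ to all of $[0,\tau_{\ast}]$. For $t\notin D$, I define $S(\cdot,t)$ as the weak limit in $W^{3,p}(\Omega,\mathbb{R}^{3})$ of $S(\cdot,s)$ as $s\to t^{+}$ along $D$; the existence of this limit is guaranteed by the variation bound on $S|_{D}$ together with reflexivity. The key step then is to establish $S^{N}(\cdot,t)\rightharpoonup S(\cdot,t)$ at \emph{every} $t\in[0,\tau_{\ast}]$. Fixing $\phi$ in the dual of $W^{3,p}(\Omega,\mathbb{R}^{3})$ and given $\epsilon>0$, I would choose $\tau_{k}\in D$ close enough to $t$ that the oscillation of $\langle\phi,S^{N}(\cdot,\cdot)\rangle$ between $t$ and $\tau_{k}$ is bounded by $\epsilon\|\phi\|$ uniformly in $N$, using the triangle inequality and the BV assumption. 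Then the decomposition
\begin{equation*}
\langle\phi,S^{N}(\cdot,t)-S(\cdot,t)\rangle=\langle\phi,S^{N}(\cdot,t)-S^{N}(\cdot,\tau_{k})\rangle+\langle\phi,S^{N}(\cdot,\tau_{k})-S(\cdot,\tau_{k})\rangle+\langle\phi,S(\cdot,\tau_{k})-S(\cdot,t)\rangle
\end{equation*}
reduces the problem to the already established weak convergence at $\tau_{k}$.

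For the integral part, I would first treat test functions $\psi\in C^{1}([0,\tau_{\ast}])$ via Riemann--Stieltjes integration by parts,
\begin{equation*}
\int_{0}^{\tau_{\ast}}\psi\,dS^{N}=\psi(\tau_{\ast})S^{N}(\cdot,\tau_{\ast})-\psi(0)S^{N}(\cdot,0)-\int_{0}^{\tau_{\ast}}\psi'(t)S^{N}(\cdot,t)\,dt,
\end{equation*}
and then pass to the limit by applying pointwise weak convergence in the boundary terms together with a dominated-convergence argument (justified by the uniform $W^{3,p}$-bound on $S^{N}(\cdot,t)$) in the Bochner integral. Density of $C^{1}([0,\tau_{\ast}])$ in $C^{0}([0,\tau_{\ast}])$ in the sup norm, combined once more with the uniform variation bound, extends the conclusion to arbitrary continuous test functions.

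The principal obstacle will be the careful treatment of the jump set of the BV-valued limit $S$: since $S|_{D}$ is only of bounded variation, it may possess countably many jump discontinuities, and one must ensure that the weak right-limit used to define $S$ off $D$ is indeed compatible with the claim of pointwise weak convergence at \emph{every} $t\in[0,\tau_{\ast}]$, not merely at continuity points. This is resolved by refining the diagonal extraction so that it also captures weak limits at each of the (at most countably many) discontinuity points of $S|_{D}$.
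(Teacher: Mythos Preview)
The paper does not prove this theorem at all: it is quoted verbatim from Barbu and Precupanu (\cite{barbu2012convexity}, Theorem 1.126) as a black box, so there is no ``paper's own proof'' to compare against. Your outline is the standard route to a Banach-space Helly theorem and is largely sound, but one step is not justified as written.

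The problematic line is your claim that, given $t\notin D$ and $\epsilon>0$, you can choose $\tau_{k}\in D$ so that the oscillation $\lvert\langle\phi,S^{N}(\cdot,t)-S^{N}(\cdot,\tau_{k})\rangle\rvert$ is bounded by $\epsilon\|\phi\|$ \emph{uniformly in $N$}. The hypothesis $\mathrm{Var}(S^{N};[0,\tau_{\ast}])\leq C$ controls only the \emph{total} variation; it says nothing about the variation of $S^{N}$ over the short subinterval between $t$ and $\tau_{k}$, and that local variation need not be small uniformly in $N$. (Think of $S^{N}$ having a unit jump at $1/N$: total variation is $1$ for every $N$, yet no choice of $\tau_{k}$ near $0$ controls the oscillation for all $N$ simultaneously.)

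The standard repair is to introduce the real-valued increasing functions $V^{N}(t):=\mathrm{Var}(S^{N};[0,t])$, which are uniformly bounded by $C$, and apply the \emph{scalar} Helly theorem to extract a further subsequence along which $V^{N}(t)\to V(t)$ pointwise for some increasing $V$. At any continuity point $t$ of $V$, one then has $V^{N}(\tau_{l})-V^{N}(\tau_{k})\to V(\tau_{l})-V(\tau_{k})$, which \emph{can} be made small by choosing $\tau_{k}<t<\tau_{l}$ in $D$ close to $t$; since $\|S^{N}(\cdot,t)-S^{N}(\cdot,\tau_{k})\|\leq V^{N}(\tau_{l})-V^{N}(\tau_{k})$, this supplies the missing uniform estimate. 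The at-most-countable discontinuity set of $V$ then plays the role of the jump set you already flagged in your final paragraph, and a further diagonal extraction there completes the argument. Your treatment of the Stieltjes integral via integration by parts and density is fine.
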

With this compactness result in hand, we now consider the extraction of convergent subsequences of both $\{\nabla P^{N}\}_{N=1}^{\infty}$ and $\{u^{N}\}_{N=1}^{\infty}$ in order that we may pass to a local-in-time classical solution of \eqref{AT}.
\subsubsection{Bounds on the Geopotential Fields $\{\nabla P^{N}\}_{N=1}^{\infty}$}
We begin by noting directly from \eqref{popone} above that
\begin{equation*}
\|\nabla P^{N}(\cdot, t)\|_{W^{3, p}(\Omega, \mathbb{R}^{3})}\leq \left(\kappa + \|\nabla P_{0}\|_{W^{3, p}(\Omega, \mathbb{R}^{3})}\right)e^{(1+2c_{\ast})\tau_{\ast}}
\end{equation*}
for any $0\leq t\leq \tau_{\ast}$ and $N\geq 2$. It also follows from \eqref{poptwo} that
\begin{equation*}
\mathrm{Var}(\nabla P^{N}; [0, \tau_{\ast}])\leq \left(
\kappa +\|\nabla P_{0}\|_{W^{3, p}(\Omega, \mathbb{R}^{3})}
\right)e^{(1+2c_{\ast})\tau_{\ast}}
\end{equation*}
for all $N\geq 2$. We may infer immediately from Theorem \ref{helly} above that the sequence $\{\nabla P^{N}\}_{N=1}^{\infty}$ constitutes a precompact set in $\mathrm{BV}([0, \tau_{\ast}]; W^{3, p}(\Omega, \mathbb{R}^{3}))$, whence we deduce the existence of a limit map $T\in \mathrm{BV}([0, \tau_{\ast}]; W^{3, p}(\Omega, \mathbb{R}^{3}))$ with the property that
\begin{equation*}
\nabla P^{N}(\cdot, t)\rightharpoonup T(\cdot, t)\quad \text{in}\hspace{2mm} W^{3, p}(\Omega, \mathbb{R}^{3})
\end{equation*}
for every $t\in [0, \tau_{\ast}]$. It is important to know that the limit map $T$ is a time-dependent conservative vector field on $\Omega$. We may assume, without loss of generality, that
\begin{equation*}
\int_{\Omega}P^{N}_{j}(y)\,dy=0
\end{equation*}
for all $0\leq j\leq N-1$ and $N\geq 1$. In turn, by the Poincar\'{e}-Wirtinger inequality, it follows that
\begin{equation*}
\|P^{N}(\cdot, t)\|_{L^{p}(\Omega)}\leq C\|\nabla P^{N}(\cdot, t)\|_{W^{3, p}(\Omega, \mathbb{R}^{3})}
\end{equation*}
for some constant $C=C(\Omega, p)>0$ and so, by the extraction of yet another (relabelled) subsequence $\{\nabla P^{N}\}_{N=1}^{\infty}$, it follows by the Rellich-Kondrashov Theorem that
\begin{equation*}
\nabla P^{N}(\cdot, t)\rightarrow \nabla P(\cdot, t)\quad \text{in}\hspace{2mm} W^{1, p}(\Omega, \mathbb{R}^{3})
\end{equation*}
as $N\rightarrow\infty$ for some scalar-valued function $P(\cdot, t)$. Moreover, as we are guaranteed the pointwise convergence result
\begin{equation*}
D^{2}P^{N}(x, t)\rightarrow D^{2}P(x, t)\quad \text{in}\hspace{2mm}\mathbb{R}^{3\times 3}
\end{equation*}
as $N\rightarrow\infty$ for all $t\in [0, \tau_{\ast}]$ and $x\in\Omega$, it follows that $P(\cdot, t)$ is $\lambda_{0}/2$-uniformly convex on $\Omega$ for $t\in [0, \tau_{\ast}]$. Finally, by the Dominated Convergence Theorem, it follows that
\begin{equation*}
\lim_{N\rightarrow\infty}\left(\int_{0}^{\tau_{\ast}}\|\nabla P^{N}(\cdot, t)-\nabla P(\cdot, t)\|_{L^{p}(\Omega, \mathbb{R}^{3})}^{q}\right)^{1/q}=0
\end{equation*}
for any $1\leq q<\infty$. This strong convergence of the gradient geopotential fields will allow us to pass to the limit as $N\rightarrow\infty$ in \eqref{preat} equipped additionally with only the weak compactness of the set of Eulerian velocities $\{u^{N}\}_{N=1}^{\infty}$.
\subsubsection{Bounds on the Velocity Field}
It is at this point we may appeal directly to the estimates on classical solutions of the $\mathrm{div}$-$\mathrm{curl}$ system quoted in section \ref{apriori} above. From our work above it follows that
\begin{equation}\label{yoo}
\|u^{N}(\cdot, t)\|_{W^{3, p}(\Omega, \mathbb{R}^{3})}\leq 2c_{\ast}(\| \nabla P^{N}(\cdot, t)\|_{W^{3, p}(\Omega)}+|\Omega|^{1/p}),
\end{equation}
and so by estimate \eqref{popone} it follows that $\{u^{N}\}_{N=1}^{\infty}$ is uniformly bounded in $L^{r}(0, \tau_{\ast}; W^{3, p}(\Omega, \mathbb{R}^{3}))$ for any $1<r<\infty$. We infer the existence of a limit map $u\in L^{r}(0, \tau_{\ast}; W^{3, p}(\Omega, \mathbb{R}^{3}))$ for which one has that
\begin{equation*}
u^{N}\rightharpoonup u\quad \text{in}\hspace{2mm}L^{r}(0, \tau_{\ast}; W^{3, p}_{\sigma}(\Omega, \mathbb{R}^{3}))
\end{equation*}
as $N\rightarrow\infty$. Weak convergence of $u^{N}$ to its limit will suffice for the purposes of constructing a local-in-time classical solution of \eqref{AT}.
\subsection{Passing to the Limit as $N\rightarrow\infty$}
Let us suppose $\nabla P_{0}\in \mathcal{P}$ is given. For each $N\geq 1$, by proposition \ref{existence}, there exists a classical solution $\{\nabla P^{N}_{j}\}_{N=0}^{N-1}$ and $\{u^{N}_{j}\}_{j=0}^{N-1}$ of the forward Euler scheme on $[0, \tau_{\ast}]$, where $\tau_{\ast}$ is given by \eqref{tau} above. By proposition \ref{preAT} above, for any given $\psi\in C^{\infty}_{c}(\Omega\times (0, \tau_{\ast}), \mathbb{R}^{3})$, there exists an integer $N_{\ast}=N_{\ast}(\psi)\geq 1$ such that 
\begin{align}\label{prelim}
-\int_{0}^{\tau_{\ast}}\int_{\Omega}\nabla P^{N}(x, t)\cdot\left(
\frac{\psi(x, t-\tau/N)-\psi(x, t)}{-\tau_{\ast}/N}
\right)+(u^{N}(x, t)\cdot\nabla)\psi(x, t)\cdot\nabla P^{N}(x, t)\,dxdt\vspace{2mm} \notag \\
=\int_{0}^{\tau_{\ast}}\int_{\Omega}J(\nabla P^{N}(x, t)-x)\cdot\psi(x, t)\,dxdt
\end{align}
holds true for all $N\geq N_{\ast}$. Using the previously-established fact that
\begin{equation*}
\nabla P^{N}\rightarrow \nabla P\quad \text{in}\hspace{2mm} L^{q}(0, \tau_{\ast}; W^{3, p}(\Omega, \mathbb{R}^{3}))
\end{equation*}
for any $1\leq q<\infty$, together with
\begin{equation*}
u^{N}\rightharpoonup u\quad \text{and}\hspace{2mm} L^{r}(0, \tau_{\ast}; W^{3, p}_{\sigma}(\Omega, \mathbb{R}^{3}))
\end{equation*}
for any $1<r<\infty$, it follows by passing to the limit in \eqref{prelim} that
\begin{equation}\label{weaksauce}
\int_{0}^{\tau_{\ast}}\int_{\Omega}\nabla P(x, t)\cdot\partial_{t}\psi(x, t)+(u(x, t)\cdot\nabla)\psi(x, t)\cdot\nabla P(x, t)+J(\nabla P(x, t)-x)\cdot\psi(x, t)\,dxdt=0
\end{equation}
for all $\psi\in C^{\infty}_{c}(\Omega\times (0, \tau_{\ast}), \mathbb{R}^{3})$, whence $\nabla P$ and $u$ constitute a local-in-time weak solution of \eqref{AT}. To obtain the proof of theorem \ref{mainres}, it remains only to show that $\nabla P$ and $u$ constitute a local-in-time classical solution of \eqref{AT}.
\subsection{Proof of Theorem \ref{mainres}}
We begin by noting that identity \eqref{weaksauce} implies that the distributional time derivative $\dot{\nabla P}$ is given explicitly by
\begin{equation}\label{ppp}
\dot{\nabla P}=-D^{2}Pu+J(\nabla P-\mathrm{id}_{\Omega})\quad \text{in}\hspace{2mm}L^{q}(0, \tau_{\ast}; W^{3, p}(\Omega, \mathbb{R}^{3})),
\end{equation}
from which we readily deduce that $\nabla P\in C^{0}([0, \tau_{\ast}]; W^{2, 2}(\Omega, \mathbb{R}^{3}))$. We aim to show that the distributional time derivative of $\nabla P$ is in fact classical. To do so, we shall show that $u$ admits some continuity properties, in particular $u\in C^{0}([0, \tau_{\ast}], W^{2, 2}_{\sigma}(\Omega, \mathbb{R}^{3}))$. Indeed, in this direction, if we choose the test function $\psi$ to be of the shape $\nabla\wedge\varphi$ for some $\varphi\in C^{\infty}_{c}(\Omega\times (0, \tau_{\ast}), \mathbb{R}^{3})$, it follows from an application of integration by parts that
\begin{equation*}
\int_{0}^{\tau_{\ast}}\int_{\Omega}\nabla\wedge (D^{2}P(x, t)u(x, t)-\nabla\wedge(\nabla P(x, t)-x))\cdot\varphi(x, t)\,dxdt=0
\end{equation*}
for all $\varphi\in C^{\infty}_{c}(\Omega\times (0, \tau_{\ast}), \mathbb{R}^{3})$. We infer that for Lebesgue a.e. $t\in [0, \tau_{\ast}]$, it holds that (any member of the equivalence class associated to) the Eulerian velocity field $u(\cdot, t)\in W^{3, p}_{\sigma}(\Omega, \mathbb{R}^{3})$ solves
\begin{equation*}
\left\{
\begin{array}{l}
\nabla\wedge(D^{2}P(\cdot, t)u(\cdot, t))=\nabla\wedge(\nabla P(\cdot, t)-\mathrm{id}_{\Omega}), \vspace{2mm}\\
\nabla \cdot u(\cdot, t)=0, \vspace{2mm}\\
u(\cdot, t)\cdot n=0\quad \text{on}\hspace{2mm}\partial\Omega
\end{array}
\right.
\end{equation*}
pointwise in the classical sense on $\Omega$. We note the right-hand side of the curl equation can be considered as an element of $C^{0}([0, \tau_{\ast}]; W^{1, 2}(\Omega, \mathbb{R}^{3}))$. As the map $t\mapsto \nabla \wedge(\nabla P(\cdot, t)-\mathrm{id}_{\Omega})$ is defined everywhere on $[0, \tau_{\ast}]$, by redefining the (representative) $u$ on a set of measure zero to be the unique classical solution of the above div-curl system, we may appeal to the following simple lemma.
\begin{lem}[Stability]
Suppose $\{A_{k}\}_{k=1}^{\infty}\subset W^{2, p}(\Omega, \mathbb{R}^{3\times 3})$ and $\{F_{k}\}_{k=1}^{\infty}\subset W^{2, p}(\Omega, \mathbb{R}^{3})$ are convergent sequences, i.e.
\begin{equation*}
A_{k}\rightarrow A\in W^{2, p}(\Omega, \mathbb{R}^{3\times 3}) \quad \text{and}\quad F_{k}\rightarrow F\in W^{2, p}(\Omega, \mathbb{R}^{3})
\end{equation*}
in their respective topologies as $k\rightarrow\infty$. Let $u_{k}\in W^{3, p}_{\sigma}(\Omega, \mathbb{R}^{3})$ denote the unique classical solution of the div-curl system
\begin{equation*}
\left\{
\begin{array}{l}
\nabla\wedge(A_{k}u_{k})=F_{k},\vspace{2mm}\\
\nabla\cdot u_{k}=0, \vspace{2mm}\\
u_{k}\cdot n=0\quad \text{on}\hspace{2mm}\partial\Omega.
\end{array}
\right.
\end{equation*}
If follows that $u_{k}\rightharpoonup u$ in $W^{3, p}_{\sigma}(\Omega, \mathbb{R}^{3})$ as $k\rightarrow\infty$, where $u$ is the unique classical solution of the div-curl system
\begin{equation*}
\left\{
\begin{array}{l}
\nabla\wedge(Au)=F,\vspace{2mm}\\
\nabla\cdot u=0, \vspace{2mm}\\
u\cdot n=0\quad \text{on}\hspace{2mm}\partial\Omega.
\end{array}
\right.
\end{equation*}
\end{lem}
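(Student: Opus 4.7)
The plan is to combine the a priori estimate \eqref{bigcee} from Wang's Theorem with a weak compactness argument and the uniqueness of the limit div-curl system. First I would establish a uniform bound on $\{u_{k}\}_{k=1}^{\infty}$ in $W^{3,p}_{\sigma}(\Omega,\mathbb{R}^{3})$. Since $F_{k}\to F$ in $W^{2,p}(\Omega,\mathbb{R}^{3})$, the sequence $\{F_{k}\}$ is norm-bounded. Moreover, as $p>3$, Morrey's embedding yields $A_{k}\to A$ in $C^{1,\alpha}(\bar{\Omega},\mathbb{R}^{3\times 3})$, so $\mu_{k}:=\|A_{k}\|_{C^{1,\alpha}(\bar{\Omega},\mathbb{R}^{3\times 3})}$ is uniformly bounded and (since $u_{k}$ is only well-defined when the relevant ellipticity constant is positive) the ellipticity constants $\lambda_{k}$ of $A_{k}$ converge to that of $A$, hence admit a uniform positive lower bound for large $k$. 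Consequently the constant $C_{\ast}=C_{\ast}(\Omega,3,p,\lambda_{k},\mu_{k})$ appearing in \eqref{bigcee} can be chosen independently of $k$, yielding
\begin{equation*}
\sup_{k\geq 1}\|u_{k}\|_{W^{3,p}(\Omega,\mathbb{R}^{3})}\leq C_{\ast}\sup_{k\geq 1}\|F_{k}\|_{W^{2,p}(\Omega,\mathbb{R}^{3})}<\infty.
\end{equation*}

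Next I would extract a (relabelled) subsequence with $u_{k}\rightharpoonup v$ in $W^{3,p}_{\sigma}(\Omega,\mathbb{R}^{3})$; the constraints $\nabla\cdot v=0$ on $\Omega$ and $v\cdot n=0$ on $\partial\Omega$ are preserved under weak convergence because $W^{3,p}_{\sigma}$ is weakly closed and the trace operator is weak-to-weak continuous. The crux is passing to the limit in the curl equation. I would write
\begin{equation*}
A_{k}u_{k}=Au_{k}+(A_{k}-A)u_{k}
\end{equation*}
and observe that multiplication by the fixed matrix field $A\in C^{1,\alpha}(\bar\Omega,\mathbb{R}^{3\times 3})$ (and hence in $W^{2,p}$) is a bounded linear operator on $W^{3,p}(\Omega,\mathbb{R}^{3})$, so $Au_{k}\rightharpoonup Av$ weakly in $W^{3,p}$; meanwhile, the remainder satisfies
\begin{equation*}
\|(A_{k}-A)u_{k}\|_{W^{2,p}(\Omega,\mathbb{R}^{3})}\leq C\,\|A_{k}-A\|_{C^{1,\alpha}(\bar{\Omega},\mathbb{R}^{3\times 3})}\|u_{k}\|_{W^{3,p}(\Omega,\mathbb{R}^{3})}\rightarrow 0
\end{equation*}
as $k\rightarrow\infty$. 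Taking curls (which is weak-to-weak continuous from $W^{3,p}$ to $W^{2,p}$) and using $F_{k}\to F$ strongly in $W^{2,p}$, I obtain $\nabla\wedge(Av)=F$ on $\Omega$ in the classical sense after redefining on a null set.

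Finally, by the uniqueness portion of the quoted proposition of Wang, $v$ must coincide with the unique classical solution $u$ of the limiting div-curl system. Since the limit is independent of the subsequence, the entire sequence converges weakly: $u_{k}\rightharpoonup u$ in $W^{3,p}_{\sigma}(\Omega,\mathbb{R}^{3})$. The principal obstacle is the passage to the limit in the product $A_{k}u_{k}$, which rests on combining strong convergence of $A_{k}$ (via Morrey) with weak convergence of $u_{k}$; once this is established, uniqueness from Wang's theorem does the rest.
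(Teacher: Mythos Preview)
The paper does not actually supply a proof of this lemma; it is stated as a ``simple lemma'' and used immediately thereafter. Your argument is the natural one and is essentially correct: a uniform $W^{3,p}$ bound from \eqref{bigcee}, weak compactness, passage to the limit in the curl equation, and identification of the limit via the uniqueness in Wang's theorem.

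One technical slip to fix: you assert that multiplication by the fixed field $A\in C^{1,\alpha}(\bar\Omega,\mathbb{R}^{3\times 3})$ is a bounded linear operator on $W^{3,p}(\Omega,\mathbb{R}^{3})$. This is not quite right, since $D^{3}(Au)$ would involve third derivatives of $A$, which you do not have (only $A\in W^{2,p}\hookrightarrow C^{1,\alpha}$). The remedy is painless: pass to the limit in a weaker topology. For instance, from $u_{k}\rightharpoonup v$ in $W^{3,p}$ and Rellich--Kondrashov you get $u_{k}\to v$ strongly in $W^{2,p}$, and since $A_{k}\to A$ in $C^{1,\alpha}$ the products satisfy $A_{k}u_{k}\to Av$ strongly in $W^{1,p}$; then $\nabla\wedge(A_{k}u_{k})\to\nabla\wedge(Av)$ in $L^{p}$, which is more than enough to identify the limit equation. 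Alternatively, one can simply test against $\varphi\in C^{\infty}_{c}(\Omega,\mathbb{R}^{3})$ and pass to the limit distributionally. Either route closes the argument.
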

As a consequence of the above lemma, we infer that $u\in C^{0}([0, \tau_{\ast}], W^{2, 2}_{\sigma}(\Omega, \mathbb{R}^{3}))$. Finally, we infer from identity \eqref{ppp} that $\dot{\nabla P}\in C^{0}([0, \tau_{\ast}], W^{2, 2}(\Omega, \mathbb{R}^{3}))$, and in turn that $\nabla P$ and $u$ constitute a local-in-time classical solution of \eqref{AT}. This ends the proof of theorem \ref{mainres}.
\section{Brief Remarks on the Variable Rotation Model}
Let us now make some brief comments as to how one can extend what has been achieved in this article to the construction of local-in-time physical classical solutions of the incompressible semi-geostrophic system with variable Coriolis force. Suppose $f:\Omega\rightarrow\mathbb{R}$ is a given function of class $\mathscr{C}^{1}$ with the property that
\begin{equation}\label{slowspace}
\|f\|_{L^{\infty}(\Omega)}\ll 1\quad \text{and}\quad \|\nabla f\|_{L^{\infty}(\Omega, \mathbb{R}^{3})}\ll 1,
\end{equation}
which we subsequently interpret at a spatially-varying Coriolis force. The associated semi-geostrophic system takes the following form (c.f. Hoskins \cite{hoskins1975geostrophic}, page 236):
\begin{equation}\label{vary}
\left(\frac{\partial}{\partial t}+u\cdot\nabla\right)\left(K_{f}\nabla \phi\right)-fJ^{2}u=J\nabla \phi,
\end{equation}
where $\phi:\Omega\times [0, \infty)\rightarrow\mathbb{R}$ is the unknown geopotential, and $K_{f}:\Omega\rightarrow\mathbb{R}^{3\times 3}$ is given by
\begin{equation*}
K_{f}(x):=\left(
\begin{array}{ccc}
\frac{1}{f(x)} & 0 & 0 \\
0 & \frac{1}{f(x)} & 0 \\
0 & 0 & 1
\end{array}
\right) \quad \text{for}\hspace{2mm}x\in\Omega.
\end{equation*}
With section \ref{yoyoyo} above in mind, we posit that a suitable forward Euler scheme associated to this system is given by
\begin{equation}
\nabla \phi_{j+1}=\nabla \phi_{j}-\varepsilon \left(D^{2}\phi_{j}-K_{f}^{-1}\frac{\nabla \phi_{j}\otimes \nabla f}{f^{2}}\right)u_{j}+\varepsilon K_{f}^{-1}J\nabla \phi_{j},
\end{equation}
for $j=0, ..., N-1$, which is subject to the initial condition $\nabla \phi_{0}\in\mathcal{P}$. Owing to the fact that the `physical' requirement \eqref{slowspace} need hold true, it follows that at each time step, the matrix-valued term $f^{-2}K_{f}^{-1}(\nabla \phi_{j}\otimes \nabla f)$ can be viewed as a small perturbation of the Hessian of the uniformly convex map $\phi_{j}$. As such, condition \eqref{slowspace} allows one to propagate uniform convexity of the approximate geopotential by applying the results of Wang \cite{wang2018solvability} to the uniformly elliptic system
\begin{equation*}
\left\{
\begin{array}{l}
\nabla\wedge\left(\left(D^{2}\phi_{j}-K_{f}^{-1}\frac{\nabla \phi_{j}\otimes \nabla f}{f^{2}}\right)u_{j}\right)=\nabla\wedge K_{f}^{-1}J\nabla \phi_{j},\vspace{2mm}\\
\nabla\cdot u_{j}=0, \vspace{2mm}\\
u_{j}\cdot n=0,
\end{array}
\right.
\end{equation*}
for $j=0, ..., N-1$. We leave the details of the construction of local-in-time classical solutions of \eqref{vary} to the reader.
\section{Closing Remarks}\label{final}
In this article, we have tackled and solved the open problem on the existence of local-in-time physical classical solutions of the incompressible semi-geostrophic system \eqref{SG} on bounded simply-connected domains. Moreover, our solutions $\nabla P$ admit the important physical property that the geostrophic measure $\nabla P(\cdot, t)\#\mathscr{L}_{\Omega}$ is of bounded support in $\mathbb{R}^{3}$, which is a distinct novelty in our work. We have, however, made no effort here to demonstrate uniqueness of these solutions, but rather refer the reader to section 9 of Cheng, Cullen and Feldman \cite{cheng2017solvability} and the work of Feldman and Tudorascu \cite{feldman2017semi} for related results thereon. In order to extend these efforts to the construction of global-in-time weak solutions of \eqref{SG}, one must refine one's study of the propagation of uniform convexity of the geopotential $P(\cdot, t)$ in Eulerian co-ordinates. We consider this in future work.  
\subsection*{Acknowledgements}
The author would like to thank Mike Cullen, Beatrice Pelloni, Endre S\"{u}li and Jacques Vanneste for stimulating discussions related to the material in this article. I would also like to thank Daniel Coutand and Steve Shkoller for clarification on certain properties of div-curl systems. I would also like to thank an anonymous referee whose comments led to improvements on an earlier version of this article.

\appendix

\bibliography{bib}

\end{document}